\newlength\myindent
\newlength\mycolwid
\newtheorem{theorem}{Theorem}
\newtheorem{lemma}[theorem]{Lemma}
\newtheorem{corollary}[theorem]{Corollary}
\title{\LARGE\bf Thermodynamic Limit of Interacting Particle Systems over Time-varying Sparse Random Networks}
\author{Augusto Almeida Santos$^\star$, Soummya Kar$^\star$, Jos\'e M. F. Moura$^\star$, Jo\~ao Xavier$\dagger$

\thanks{This work was partially supported by NSF grant $\#$ CCF-$1513936$,  Fundac\~ao para a Ci\^encia e a Tecnologia grant [UID/EEA/50009/2013] and part of the work was developed under the grant SFRH/BD/33516/2008 by the Funda\c{c}\~{a}o para a Ci\^{e}ncia e a Tecnologia through the Carnegie Mellon Portugal program.}
\thanks{$^\star$ A. A. Santos, J. M. F. Moura, S. Kar are with the Dep.~of Electrical and Computer Engineering,
Carnegie Mellon University, Pittsburgh, PA 15213, USA (augusto.pt@gmail.com, moura@ece.cmu.edu, soummyak@andrew.cmu.edu).}
\thanks{$\dagger$ J. Xavier is with the Institute for Systems and Robotics (ISR/IST), LARSyS, Instituto Superior T\'ecnico, Universidade de Lisboa, Av.~Rovisco Pais, Lisboa, Portugal (jxavier@isr.ist.utl.pt).}}
\begin{document}

\maketitle
\thispagestyle{empty}
\pagestyle{plain}

\begin{abstract}
We establish a functional weak law of large numbers for observable macroscopic state variables of interacting particle systems (e.g., voter and contact processes) over fast time-varying sparse random networks of interactions. We show that, as the number of agents~$N$ grows large, the proportion of agents~$\left(\overline{Y}_{k}^{N}(t)\right)$ at a certain state~$k$ converges in distribution -- or, more precisely, weakly with respect to the uniform topology on the space of \emph{c\`adl\`ag} sample paths -- to the solution of an ordinary differential equation over any compact interval~$\left[0,T\right]$. Although the limiting process is Markov, the prelimit processes, i.e., the normalized macrostate vector processes~$\left(\mathbf{\overline{Y}}^{N}(t)\right)=\left(\overline{Y}_{1}^{N}(t),\ldots,\overline{Y}_{K}^{N}(t)\right)$, are non-Markov as they are tied to the \emph{high-dimensional} microscopic state of the system, which precludes the direct application of standard arguments for establishing weak convergence. The techniques developed in the paper for establishing weak convergence might be of independent interest.
\end{abstract}

\emph{Keywords:} Interacting particle systems; large-scale systems; thermodynamic limit; functional weak law of large numbers; time-varying sparse random networks; fluid limits; non-Markovian processes

\section{Introduction}

Systems of interacting agents -- often abstracted as interacting particle systems -- can model many applications of large-scale networked agents from voting processes to diffusion of opinions or epidemics in large populations; examples include the Harris contact process, the voter process, and the Glauber-Ising model~\cite{liggett,liggett1999stochastic,baxter} in statistical mechanics. Due to the large-scale of such systems, it is often not feasible to follow the pathwise dynamics of the \emph{high-dimensional} microstate, i.e., of the collective state of all individuals in the population. As an alternative, one could attempt to study the evolution of the system by observing macroscopic quantities, that is, the state variables defined as global averages or \emph{low-resolution} functionals of the microstate. For instance, in epidemics, observing the binary state of each individual -- infected or not infected -- is prohibitive in a large population; instead, one often tracks the fraction of infected nodes in the population -- a macroscopic observable. However, and in light of the discussion in the introduction of~\cite{penrose2}, while the microstate of a system and the local rules of interaction completely determine its evolution (determinism principle), two systems at the same macrostate may evolve very differently. For instance, two distinct communities~$A$ and~$B$ may each have~$50\%$ of infected individuals, but community~$A$ may have a large number of contacts intertwining infected and healthy individuals -- a microscopic information -- as opposed to community~$B$ that may have a more clustered configuration. These microscopic configurations cause very different evolutions of the system at the macroscale: for example, the infected population will tend to increase at a much faster rate in community~$A$. Fig.~\ref{fig:nonmarkov2} illustrates this example.
\begin{figure} [hbt]
\begin{center}
\includegraphics[scale= 0.5]{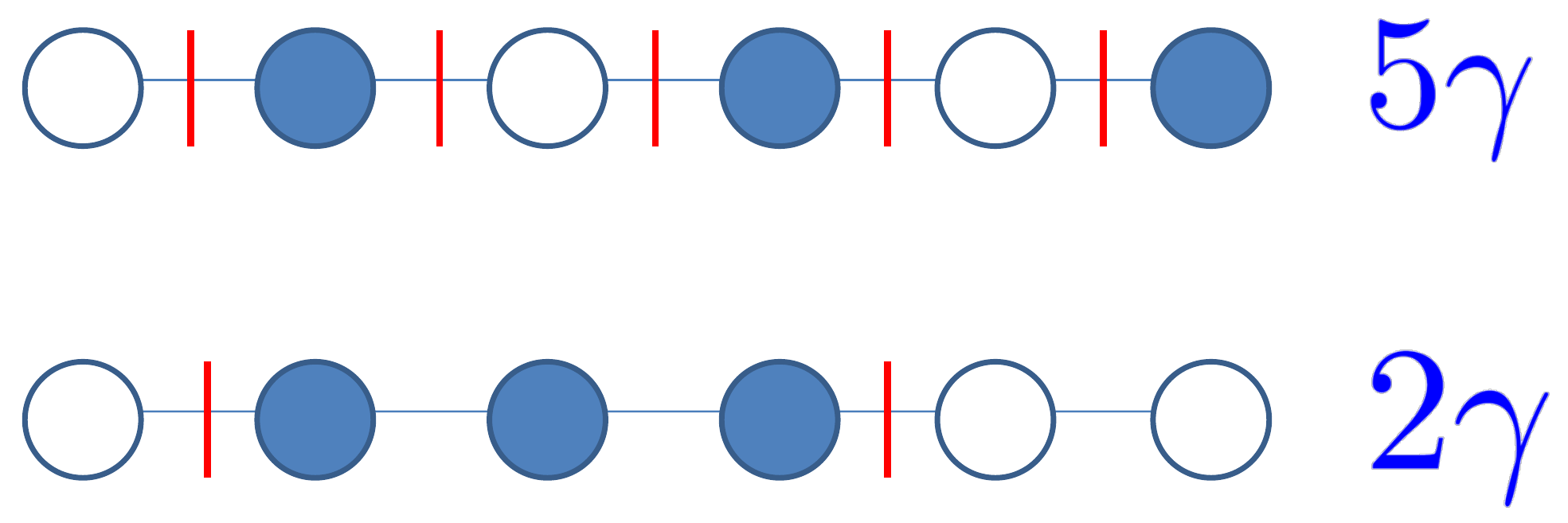}
\caption{Two line networks with the same number of infected nodes (blue/collored nodes). In the bottom network the configuration is clustered and the infection spreads only through the two links of contact between infected and healthy nodes. In the top network the infection can spread faster through the larger number of contact links between infected and healthy nodes.}\label{fig:nonmarkov2}
\end{center}
\end{figure}
It remains a major challenge in statistical mechanics to understand how the microscopics are exactly quotiented out, in the limit of large-scale interacting agents, to engender deterministic laws at the macroscale (a.k.a. fluid limit dynamics) without resorting to simplifying hypothesis of: i) full mixing: where the underlying network of interactions is complete\footnote{Any agent can interact with any other agent at any time.}; ii) ideal gases: no interaction among agents or iii) ergodicity. In such cases, the macrostates are overall Markov.

More formally, and for the sake of clarity, we introduce the concept of realization. Let~$\left(X(t)\right)$ be Markov. We say that the stochastic process~$\left(X(t)\right)$ is a refinement of the process~$\left(Y(t)\right)$ if~$\left(Y(t)\right)$ is measurable with respect to~$\left(X(t)\right)$. We say that~$\left(Y(t)\right)=\left(F\left(X(t)\right)\right)$ \emph{realizes} its refinement~$\left(X(t)\right)$ when~$\left(Y(t)\right)$ is Markov, i.e., its local (in time) evolution at each time~$t$ depends on the finer process~$\left(X(t)\right)$ only through~$\left(Y(t)\right)$ itself at each time~$t$. For instance, if~$F$ is bijective, then~$\left(Y(t)\right)$ trivially realizes~$\left(X(t)\right)$. In statistical mechanics,~$\left(X(t)\right)$ plays the role of the \emph{high-dimensional} microscopic process conveying all the information of the interacting particle system and~$\left(Y(t)\right)$ plays the role of the \emph{low-dimensional} macroscopic observable. In such framework,~$F$ is not bijective -- many different microstate configurations yield the same macroscopic observation.

In general, even though the microstate~$\left(X(t)\right)$ is Markov, the macrostates~$\left(Y(t)\right)$ are not as they are tied to the microstate, i.e., they do not realize the microstate. This paper proves that, in interacting particle systems over appropriate fast time-varying networks, the macrostates asymptotically realize the microstate and become Markov: as the number of agents grows large, knowledge of the macrostates~$Y(t)$ at present~$t$ becomes sufficient to foresee their immediate future. In other words, the determinism principle is recovered in the limit of large scale (time-varying) networks for the macrostate quantities. Formally, we prove that a sequence of non-Markov macrostate processes~$\left(\mathbf{\overline{Y}}^N(t)\right)$ converges in distribution -- i.e., weakly with respect to the uniform topology on the space of c\`adl\`ag sample paths -- to a process~$\left(\mathbf{y}(t)\right)$ that is the solution to an ordinary differential equation whose vector field only depends on~$\left(\mathbf{y}(t)\right)$, and thus, the limiting process~$\left(\mathbf{y}(t)\right)$ is Markov. In other words, the process~$\left(\overline{Y}^N(t)\right)$ realizes the microstate~$\left(X^N(t)\right)$ asymptotically in~$N$ (though not for finite~$N$).

When the exact ODE fluid limit dynamics associated with macroscopic state variables of a complex system exists, it provides a means to study interacting particle systems at the low-dimensional macroscale without keeping track of their microscopics; in particular, fluid limits are relevant to study the stability and metastability -- often observed in dynamical systems exhibiting multiple equilibria with \emph{large} basins of attraction -- of systems at the macroscale, e.g.,~\cite{mathieurobert,Antunes2,Queues} and, it is useful to study the qualitative dynamics of such large-scale systems outside their thermo-equilibrium (if there is one). These in turn maps to questions of determining thresholds and conditions on the parameters of the dynamical system under which this is ergodic or not (due to having multiple invariant measures). Hence it is worth seeking conditions on the underlying dynamics of the network (if not static) and of the interactions among agents that yield an exact fluid dynamics. To the best of our knowledge, such conditions are still sparse in the literature and exact concentration results for macroscopic variables associated with stochastic processes over networks are still a very open question in the literature: they are only available for complete networks, or trivial variants, e.g., supernetwork of densely connected cliques (a.k.a., network of communities), or complete-multipartite networks. The major reason lies in the fact that in such densely connected cases, the macroscopic variables of interest realize the microscopics, i.e., they are Markov and convergence results follow as corollary to now standard arguments such as Kurtz Theorem~\cite{Norris} (for weak convergence on the sample path), or Stein's method (for weak convergence of some limiting distribution, e.g., Gibbs measure), whereas for other settings, involving non-Markovian prelimit processes, asymptotic properties are relatively less explored. For instance, a complete network assumption is crucial to prove the limiting theorems in~\cite{Kirk} as it leads to full-exchangeability of the underlying macroscopic quantities (which in turn allows to resort to Stein's method). In our model we do not assume any of the densely connected type of networks mentioned and the underlying process is not exchangeable (though partial exchangeability is present as we will remark momentarily). In fact, in our case, the network will vary over time preserving its low-sparsity -- the number of edges is~$\sim O(N)$.
Note that one may also establish relevant bounds on macroscopic quantities -- instead of seeking exact fluid limits -- via stochastic domination on sparse networks by bounding the corresponding processes over complete network counter-parts as, e.g., in~\cite{Aldous22}. That is not the goal in the current paper. In this work, we show that for a particular dynamics of the network of contacts, namely, whenever there is an interaction between two agents, the agents randomly shuffle, then we obtain an exact fluid limit concentration. It lays still open, the question of determining the broadest class of network dynamics that leads to an exact fluid limit.

Note that one can obtain the limiting Partial Differential Equation dynamics of exclusion processes over lattices via the framework of hydrodynamics, e.g.,~\cite{Landim}: where one seeks to determine the PDE thermodynamic limit associated with the evolution of the number of particles locally in space (e.g., following exclusion processes dynamics). In particular, one is interested in the limiting behavior of a process~$\left(\eta^N(x,t)\right)$ -- the number of particles in a small interval or patch of space about~$x$, whose length is of order~$O(1/N)$, -- whereas we are interested in the evolution over time (without spatial dependence, i.e., ODE instead of a general PDE) of the fraction of agents at a particular state. The above framework is different from the one studied in this paper, e.g., the former requires a local renormalization of the scale of time and each vector-process~$\left(\eta^N(x,t)\right)$ -- where the vector entries collect the number of particles at each patch of the discretized space -- in the sequence is Markov.

To summarize, this paper shows that under a time-varying random rewiring dynamics of the \textbf{sparse} network of contacts or interactions (described in Section~\ref{sec:probform}), one can obtain \textbf{exact} weak convergence of the macrostate quantities, and, in particular, characterize their macroscale dynamics (ODE fluid limit). This work thus helps in filling the gap on the characterization of exact fluid limit dynamics on processes over large sparse random networks of contacts. As explained more formally in Section~\ref{sec:summarygoal}, besides classical interacting particle systems examples, the model adopted here may be applicable to other scenarios of practical interest, such as the case of malware spread in mobile communication networks, an application of increasing relevance given the emergence of large-scale distributed denial of service (DDoS) attacks~\cite{zhuolu,frommobi}.

\textbf{Outline of the paper.} Section~\ref{sec:probform} introduces the main definitions and the dynamics assumed; Section~\ref{sec:summarygoal} formalizes the main goal of the paper; Section~\ref{sec:convline} establishes an important result on the concentration of a rate process; Section~\ref{sec:frameswr} finally establishes the weak convergence of the macroprocess and illustrates a simulation result; and Section~\ref{sec:conclud} concludes the paper.

\section{Problem Formulation}\label{sec:probform}

%

In this section, we present the general class of interacting particle systems denoted as Finite Markov Information-Exchange (FMIE) introduced in~\cite{aldous2013}, and we define the model and dynamics we assume.

\subsection{Main Constructs}

Consider~$N$ agents and let~$X^N_{ik}(t)\in \left\{0,1\right\}$ be the indicator that node~$i$ is in state $k\in\mathcal{X}:=\left\{1,2,\ldots,K\right\}$ at time~$t$, where~$K<\infty$ is fixed. We represent by
\begin{equation}
\mathbf{X}^{N}(t)=\left(X_{ik}^N(t)\right)_{ik},\,\,X_{ik}^N(t)\in \left\{0,1\right\}\nonumber
\end{equation}
the matrix microstate collecting the state of each of the~$N$ agents at time~$t$. Each node can only be at one particular state at a time, i.e., the rows of~$\left(\mathbf{X}^{N}(t)\right)$ sum to~$1$. The underlying network of potential interactions, at time~$t$, is captured by the binary adjacency matrix on~$N$ nodes~$A^N(t)\in \left\{0,1\right\}^{N \times N}$. Let
\begin{equation}
\mathbf{X}_k^{N}(t)=\left(X_{1k}^N(t),\ldots,X_{Nk}^{N}(t)\right)\in \left\{0,1\right\}^{N}\nonumber
\end{equation}
be the $k$-column of the matrix~$\mathbf{X}^{N}(t)$.
We consider the macroscopic state variables
\begin{equation}
Y_k^N(t)=\sum_{i=1}^N X_{ik}^N(t)=\mathbf{1}^{\top}\mathbf{X}_k^{N}(t)\nonumber
\end{equation}
to be the number of agents at the state~$k\in\mathcal{X}$ at time~$t$ and its normalized counterpart
\begin{equation}
\overline{Y}_k^N(t)=\frac{1}{N}\mathbf{1}^{\top}\mathbf{X}_k^{N}(t)\nonumber
\end{equation}
to be the fraction of nodes at the state~$k\in\mathcal{X}$. Also, denote by
\begin{equation}
\mathbf{\overline{Y}}^N(t)=\left(\overline{Y}_1^N(t),\ldots,\overline{Y}_K^N(t)\right),\nonumber
\end{equation}
the vector process representing the empirical distribution of nodes across states~$\mathcal{X}$.

The microstate~$\left(\mathbf{X}^{N}(t)\right)$ is updated by two distinct processes: i) the peer-to-peer interactions given by the microscopic dynamics; and ii) the network random rewiring. Both are described in the next two subsections.

\subsection{Peer-to-peer Interaction Dynamics}\label{subsec:peertopeer}

We assume~$d_i(t)$ clocks at each node~$i$, where~$d_i(t)$ is the degree of node~$i$ at time~$t$ and each clock is dedicated to a current neighbor of~$i$: once a clock ticks, agent~$i$ interacts with the corresponding neighbor (in the current network topology or geometry). The clocks are independent and exponentially distributed -- hence,~$\left(\mathbf{X}^{N}(t)\right)$ is Markov. The state~$\mathbf{X}^N(t)$ is updated by these local interactions. If~$i$ interacts with~$j$ (as the clock of~$i$ pointing to~$j$ rings) at time~$t$, the state of~$i$ and~$j$ are updated as
\begin{equation}
\left(\mathbf{e}_{i}^{\top}\mathbf{X}^N(t),\mathbf{e}_j^{\top} \mathbf{X}^N(t)\right)=G\left(\mathbf{e}_{i}^{\top}\mathbf{X}^N(t_-),\mathbf{e}_j^{\top} \mathbf{X}^N(t_-)\right)\nonumber
\end{equation}
where~$\mathbf{e}_{\ell}$ is the canonical vector with~$1$ at the $\ell$th entry, and zero otherwise, and~$G\,:\,\mathcal{E}\times \mathcal{E}\rightarrow \mathcal{E} \times \mathcal{E}\nonumber$ is the update function with
\begin{equation}
\mathcal{E}:= \left\{\mathbf{e}_1,\mathbf{e}_2,\ldots,\mathbf{e}_K\right\}.\nonumber
\end{equation}
$G$ is a function that maps the state of the interacting nodes onto the new state, similar to as defined in~\cite{aldous2013}. For instance, if a node from state~$3$ interacts with a node in state~$5$ then, the new state for both nodes will be given by the tuple~$G\left(\mathbf{e}_3,\mathbf{e}_5\right)$.

From the peer-to-peer dynamics described, the updates in the microstate~$\left(\mathbf{X}^{N}(t)\right)$ change the macrostate $\left(\mathbf{\overline{Y}}^N(t)\right)$ according to the following Martingale problem~\cite{Ethier,Diffusion} pathwise dynamics
\begin{equation}
\overline{Y}_{k}^N(\omega,t)  =  \overline{Y}_{k}^N(\omega,0)+\overline{M}_k^N(\omega,t) + \frac{1}{N}\int_{0}^t \underbrace{\sum_{m\ell\in \mathcal{X}^2} \gamma_{m\ell} c_{m\ell}(k) \mathbf{X}_{m}^{N\, \top}(s_-) A^{N}(s_-)\mathbf{X}_{\ell}^{N}(s_-)}_{=:\mathcal{F}^N_k\left(\mathbf{X}^N(t_-),A^N(t_-)\right)} ds\label{eq:pathwise}
\end{equation}
where~$\overline{Y}_{k}^N(\omega,t)$ is the fraction of agents at the state~$k$ at time~$t$ for the realization~$\omega$,~$\left(\overline{M}_{1}^N(t),\ldots,\overline{M}_{K}^N(t)\right)$ is a normalized martingale process (refer to equation~\eqref{eq:martrepre}),~$\gamma_{m\ell}$ is the rate of the exponential clocks from nodes at the state~$m$ to contact nodes at the state~$\ell$, and~$c_{m\ell}(k)\in\left\{-2,-1,0,1,2\right\}$ gives the increment in the number of nodes at state~$k$ due to the interaction between nodes in states~$m$ and~$\ell$. For instance, if whenever node~$i$ in state~$1$ interacts with node~$j$ at state~$2$ causes both~$i$ and~$j$ to turn to state~$4$, then~$c_{12}(4)=2$. The terms~$c_{m\ell}(k)$ are uniquely determined from the given update function~$G$. Note also that the clock-rates~$\gamma_{m\ell}$ may or may not depend upon the states~$m$ and~$\ell$ of the interacting nodes. For instance, for the analysis of contact processes,~$\gamma_{m\ell}$ is often assumed independent of the states and represented simply as~$\gamma$ (rate of infection).

\textbf{Remark on~$c_{m\ell}(k)$.} Note that, if two nodes at state~$k$ interact, than the number of nodes in state~$k$ cannot be incremented as a result of this interaction (the two nodes interacting are already at this state). Hence, tensor~$\mathbf{c}$ is constrained to~$c_{kk}(k)\leq 0$ for all~$k$. This leads, on the other hand, to the fact that the hyper-cube~$\left[0,1\right]^K$ is invariant under the stochastic dynamics~\eqref{eq:pathwise}.

\subsection{Rewiring Network Dynamics}\label{subsec:rewiring}

Once an update on the microstate happens, the edges of the network are randomly rewired, i.e.,
\begin{equation}
A^N(t)=P^{\top} A^N(t_-) P,\nonumber
\end{equation}
where~$P\in \mathcal{P}_{\sf er}(N)$ is drawn uniformly randomly from the set of~$N\times N$ permutation matrices~$\mathcal{P}_{\sf er}(N)$ -- each time an update occurs.

An alternative representation to the random rewiring is the following:
\begin{equation}
\mathbf{X}_{m}^{N\,\top}(t_-)\left(P^{\top}A^{N}(t_-)P\right)\mathbf{X}_{\ell}^{N}(t_-)=\left(P\mathbf{X}_{m}^{N}(t_-)\right)^{\top}A^{N}(t_-)\left(P\mathbf{X}_{\ell}^{N}(t_-)\right),\nonumber
\end{equation}
in other words, we can consider equivalently that the network~$A^N$ is fixed and that the position of the nodes permute just after an update. This interpretation is assumed throughout the paper. And in fact, such partial exchangeability allows us to consider any network~$A^N$ with a fixed number of edges. For each~$N$, we consider a regular network with degree~$d^N$ and thus, the degree, in a sense, controls the sparsity of the network -- or, if we will, the real-time bandwith of nodes for peer-to-peer contact. Note that, given any even number~$N$ and an arbitrary degree~$d<N$, one can always draw a regular bipartite graph from it. In other words, and for the sake of our problem, we can assume that the graph is regular bipartite (which will be convenient momentarily).




\section{Summary and Goal of the Paper}\label{sec:summarygoal}

The model introduced in Section~\ref{sec:probform} arises from the fact that often agents \emph{wander around} as their state evolves, instead of being static. The macroscopic dynamical laws derived may be useful, e.g., to study large-scale interacting particle systems supported in sparse dynamical network environments. One particular application is malware propagation~\cite{zhuolu} in mobile devices networks. Mobile devices move around fast in an ad-hoc manner -- hence, the underlying network of contacts changes fast over time in an ad-hoc manner -- and their real-time bandwith for peer-to-peer communication is usually low~\cite{botfarina} -- hence, the geometry of the support network of interactions is sparse -- nevertheless, the massive amount of infected mobile devices corresponds to a large-scale (botnet) network that may launch, for instance, DDoS attacks~\cite{mobtaxonomy,ddosattackswired,ddosnews}, a modern threat over the Internet-of-Things (IoT) of increasing importance~\cite{zhuolu,frommobi} and that may display non-trivial metastable behavior~\cite{augustosecurity}.

One can partition the class of interacting particle systems into three broad categories:
\begin{itemize}
  \item \textbf{Fast network mixing dynamics:} the support network dynamics, i.e., the movement of the interacting agents runs at a much faster time-scale than the interactions among the agents themselves;
  \item \textbf{Static network:} interactions among agents run at a much faster rate than the network dynamics;
  \item \textbf{Mesoscale:} both dynamics run at comparable speeds.
\end{itemize}
The fluid limit paradigm varies greatly depending on the class under analysis. Our work sits primarily upon the first category: \emph{fast network mixing dynamics}.

To summarize, the microstate~$\left(\mathbf{X}^{N}(t)\right)$ is updated in two ways: i) peer-to-peer interactions given by the microscopic dynamics; and ii) network random rewiring just described. Fig.~\ref{fig:shufflingfig} summarizes the model for the case of a contact process and assuming that (the fixed)~$A^N$ is a cycle network.
\begin{figure} [hbt]
\begin{center}
\includegraphics[scale= 0.5]{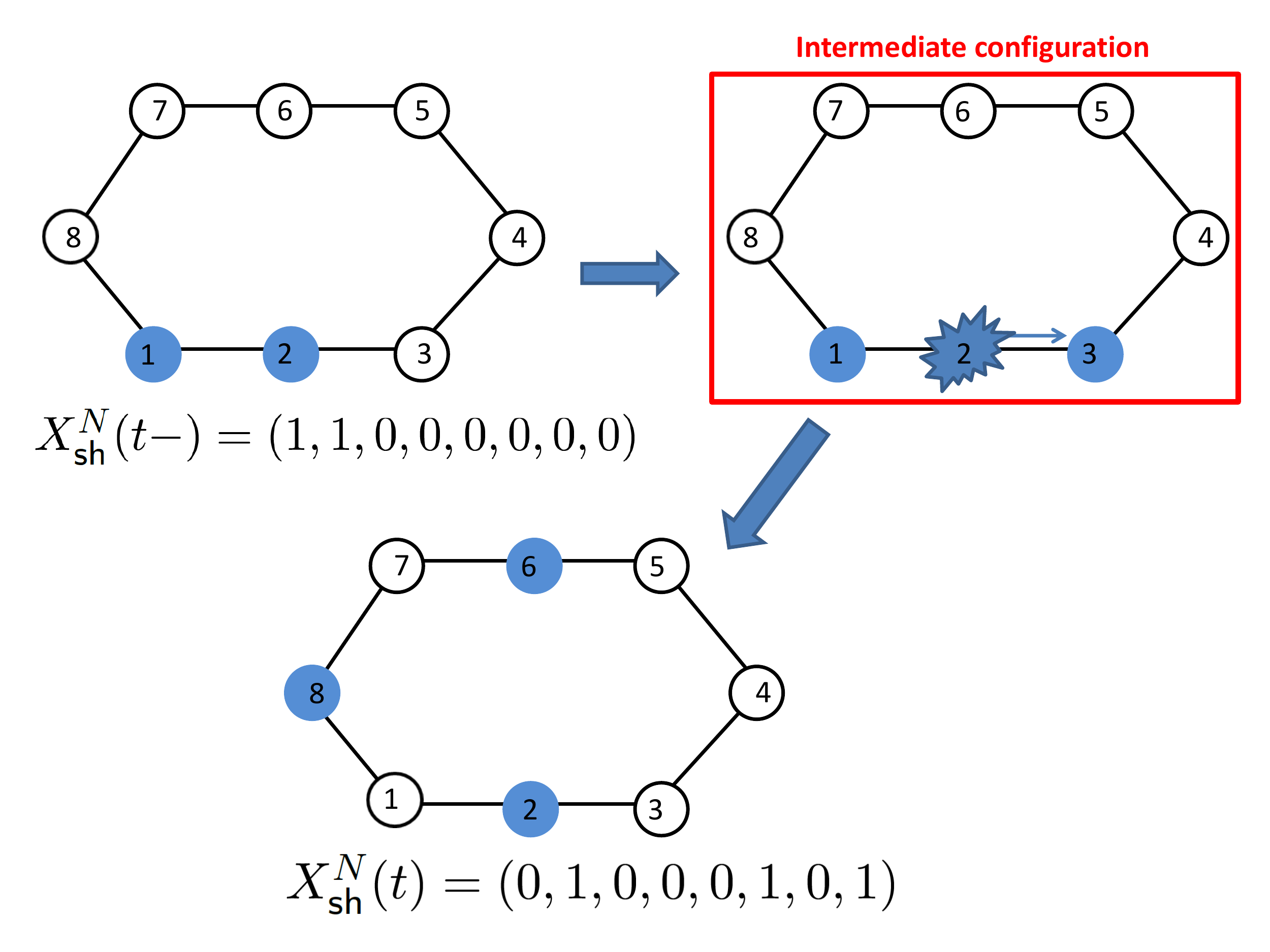}
\caption{Illustration of an update after an interaction. The clock of node~$2$ pointing to node~$3$ ticks. First, this leads to an update on the state of node~$3$ (it becomes blue); then a random permutation of the states of the nodes follow (or, equivalently, a rewiring of the edges of the graph happens -- this is not depicted in the figure). The figure illustrates the contact process over the random rewiring network.}\label{fig:shufflingfig}
\end{center}
\end{figure}


Note from equation~\eqref{eq:pathwise} that the empirical process~$\left(\mathbf{\overline{Y}}^N(t)\right)$ is not Markov as it is tied to the finer microscopics~$\left(\mathbf{X}^N(t)\right)$. Our goal is to prove that the non-Markovian sequence~$\left(\mathbf{\overline{Y}}^N(t)\right)$ converges weakly as~$N$ goes to infinite, with respect to the uniform topology on the space of c\`adl\`ag sample paths, to the solution~$\left(\mathbf{y}(t)\right)$ of the ODE
\begin{equation}
\dot{y}_{k}(t) = d \sum_{m\ell\in \mathcal{X}^2} \gamma_{m\ell} c_{m\ell}(k) y_m(t)y_{\ell}(t),\nonumber
\end{equation}
for~$k\in\mathcal{X}=\left\{1,\ldots,K\right\}$, where~$d$ is the asymptotic average degree of the limiting network, and~$\gamma_{m\ell}$ is the rate of the exponential clocks from nodes at state~$m$ to contact nodes at state~$\ell$. More compactly, the limiting equation is given by the ODE
\begin{equation}
\dot{y}_{k}(t) = d \,\,\mathbf{y}(t)^{\top}\left(\Gamma \odot C(k)\right) \mathbf{y}(t)=:f_k\left(\mathbf{y}(t)\right),\label{eq:odelower}
\end{equation}
where~$C(k):=\left(c_{m\ell}(k)\right)_{m\ell}$,~$\Gamma:=\left(\gamma_{m\ell}\right)_{m\ell}$, and~$\odot$ is the pointwise Hadamard product. Recall that the constraints on~$C$, in order to make the microscopic model physically meaningful (as referred at to the end of Subsection~\ref{subsec:peertopeer}) imply that the hyper-cube~$\left[0,1\right]^K$ is invariant under the ODE dynamics~\eqref{eq:odelower}.

This tacitly implies that, in order to perform macroscopic analysis in the large-scale, one can replace the complex stochastic microscopic dynamics in equation~\eqref{eq:pathwise} by the lower-dimensional ODE in equation~\eqref{eq:odelower}.

In the next sections, we prove weak convergence by establishing four major results:

\textbf{i) (Convergence on the line -- Subsection~\ref{subsec:largegap}).} The quadratic rate in the pathwise dynamics
\begin{equation}
\left|\frac{\mathbf{X}_m^{N\,\top}(t) A^N \mathbf{X}_{\ell}^N(t)}{N}-d^N \overline{Y}_m^N(t)\overline{Y}_{\ell}^N(t)\right| \overset{\mathbb{P}}\Longrightarrow 0\nonumber
\end{equation}
concentrates on the line (i.e., at each time~$t$) in probability exponentially fast. To prove this step, it is crucial to introduce and prove the result for an auxiliary process~$\left(\mathbf{\widetilde{X}}^N(t)\right)$ that is coupled with the original one~$\left(\mathbf{X}^N(t)\right)$, which is done in Subsection~\ref{subsec:largeaux}.

\textbf{ii) (Tightness -- Subsection~\ref{subsec:tightgap}).} Since, for each time~$t$, the quadratic rate converges exponentially fast, then it is tight via Theorem~\ref{th:poissondecay} -- note that simply convergence on the line does not imply tightness~\cite{billi}.

\textbf{iii) (Martingale converges to zero in probability -- Subsection~\ref{subsec:martconverges}).} We show that
\begin{equation}
\left|\left|M_k^N(t)\right|\right|_{\sup\left[0,T\right]}\overset{\mathbb{P}}\longrightarrow 0,\nonumber
\end{equation}
for every~$k\in\mathcal{X}$, where~$\left|\left|\cdot\right|\right|_{\sup\left[0,T\right]}$ is the sup-norm on the space of sample paths over the interval~$\left[0,T\right]$.

\textbf{iii) (Weak convergence -- Section~\ref{sec:frameswr}).} Relying on points i), ii) and iii), and by a standard evocation of the Skorokhod Representation Theorem~\cite{skorokhod,billi}, one can show weak convergence of the empirical process~$\left(\mathbf{\overline{Y}}^N(t)\right)$ to the solution of an ODE over each compact interval~$\left[0,T\right]$.

In what follows, we refer to the process
\begin{equation}
R^N_{m\ell}(t):=\frac{\mathbf{X}_m^{N\,\top}(t) A^N \mathbf{X}_{\ell}^N(t)}{N}-d^N \overline{Y}_m^N(t)\overline{Y}_{\ell}^N(t)\nonumber
\end{equation}
as the gap process.

\section{Weak Convergence of the Gap Process}\label{sec:convline}


In this section, we prove the following concentration in probability for the gap process (refer to Theorem~\ref{th:sup} for its formal statement)
\begin{equation}
\left|\left|\frac{\mathbf{X}_m^{N\,\top}(t) A^N \mathbf{X}_{\ell}^N(t)}{N}-d^N \overline{Y}_m^N(t)\overline{Y}_{\ell}^N(t)\right|\right|_{\sup\left[0,T\right]}\overset{\mathbb{P}}\Longrightarrow 0\nonumber
\end{equation}
where~$\overline{Y}_k^N(t)$ is the fraction of nodes at the state~$k\in\mathcal{X}$ at time~$t\in\left[0,T\right]$. In other words, the gap process is tight and this will be crucial to establish weak convergence of the macroscopic process~$\left(\mathbf{\overline{Y}}^N(t)\right)$. To prove such tightness result, we first establish it on the line for an auxiliary process that is coupled to our original microscopic process~$\mathbf{X}^N(t)$.

\subsection{Conditional Large Deviation on the Line for an Auxiliary Process}\label{subsec:largeaux}

Let~$\left(\mathbf{\widetilde{X}}_m^{N}(t)\right)=\left(\widetilde{X}_{1m}^{N}(t),\ldots,\widetilde{X}_{Nm}^{N}(t)\right)$ be defined as follows: whenever there is an interaction at~$t-$, and assuming we have~$\alpha_m N$ nodes at state~$m$ just after that interaction, the coordinates~$\widetilde{X}_{im}^{N}(t)$ are updated by the realization of~$N$ i.i.d. Bernoulli random variables (conditioned on~$\mathbf{\overline{Y}}^N(t)$) with conditional-law
\begin{equation}
\mathbb{P}\left(\widetilde{X}_{im}^{N}(t)=1\left|\overline{Y}_m^N(t_-)=\alpha_m\right.\right)=\alpha_m.\nonumber
\end{equation}

\textbf{Remark.} One can partition the set of edges~$E^N$ comprising~$A^N$ into sets of independent edges~$E^N=\bigcup_k E^N_k$ (a.k.a., matchings~\cite{west2008}), i.e., edges that do not share nodes in common. In other words,
\begin{equation}
i_1 i_2,\,i_3 i_4 \in E^N_k \Rightarrow i_m\neq i_n\mbox{ for any }m\neq n,\,m,n\in\left\{1,2,3,4\right\},\nonumber
\end{equation}
and note from the definition of~$\mathbf{\widetilde{X}}_m^N(t)$ that~$\widetilde{X}_{i_1m}(t)\widetilde{X}_{i_2m}(t)$ and~$\widetilde{X}_{i_3m}(t)\widetilde{X}_{i_4m}(t)$ are independent (given~$\mathbf{\overline{Y}}^N(t)$) if~$i_1 i_2,\,i_3 i_4 \in E_k$ for some~$k$. A matching that pairs all nodes is called a perfect matching. It is a rather simple to prove and well-known fact that the set of edges of a $d^N$-regular bipartite graph admits a partition into~$d^N$ perfect matchings of size~$N$ (number of nodes), e.g., refer to~\cite{bondy}.

The following lemma follows from a Bernstein concentration inequality~\cite{chung2006,concentration} for independent random variables (refer to Corollary~\ref{co:indepe} in the Appendix). For simplicity, in what follows, we denote~$\alpha_{m\ell}:=\alpha_m\alpha_{\ell}$. Let~$\left[\alpha\right]$ round~$\alpha>0$ so that~$\left[\alpha N\right]\in\mathbb{N}$ is the closest integer (from above) to~$\alpha N$.

\begin{lemma}\label{lem:concentra}
Let~$t\in\left[0,T\right]$ be fixed. For any~$\epsilon>0$, there is~$N_0,k>0$ so that
\begin{equation}
\mathbb{P}\left(\left|\frac{{\mathbf{\widetilde{X}}}_m^{N\,\top}(t)A^N {\mathbf{\widetilde{X}}_{\ell}}^{N}(t)}{N}-d \left[\alpha_{m\ell}\right]\right|>\epsilon \left|\overline{Y}_m^{N}(t_-)=\left[\alpha_{m}\right],\overline{Y}_{\ell}^{N}(t_-)=\left[\alpha_{\ell}\right]\right.\right)\leq 2e^{-kN},\nonumber
\end{equation}
for all~$N>N_0$, where~$k$ does not depend on~$\alpha_{m\ell}$ (this latter information is relevant for Theorem~\ref{th:explineconv} and follows from Corollary~\ref{co:indepe} in the Appendix).
\end{lemma}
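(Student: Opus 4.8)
The plan is to condition on the event $\{\overline{Y}_m^N(t_-)=[\alpha_m],\ \overline{Y}_\ell^N(t_-)=[\alpha_\ell]\}$ throughout and to exploit the perfect-matching decomposition of the $d^N$-regular bipartite graph mentioned in the Remark preceding the lemma. Write $E^N=\bigcup_{r=1}^{d^N} M_r$ where each $M_r$ is a perfect matching of size $N$. Then
\begin{equation}
\mathbf{\widetilde{X}}_m^{N\,\top}(t)A^N\mathbf{\widetilde{X}}_\ell^{N}(t)=\sum_{r=1}^{d^N}\ \sum_{ij\in M_r}\bigl(\widetilde{X}_{im}^N(t)\widetilde{X}_{j\ell}^N(t)+\widetilde{X}_{jm}^N(t)\widetilde{X}_{i\ell}^N(t)\bigr),\nonumber
\end{equation}
so it suffices to control each matching sum. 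Fix $r$ and set $Z_{ij}:=\widetilde{X}_{im}^N(t)\widetilde{X}_{j\ell}^N(t)+\widetilde{X}_{jm}^N(t)\widetilde{X}_{i\ell}^N(t)$ for $ij\in M_r$. Because $M_r$ is a matching, no two of its edges share a vertex; conditioned on $\mathbf{\overline{Y}}^N(t_-)$ the coordinates $\widetilde{X}_{\cdot\cdot}^N(t)$ are independent Bernoulli, so the $N/2$ variables $\{Z_{ij}\}_{ij\in M_r}$ are conditionally independent. Each lies in $[0,2]$ and has conditional mean $\mathbb{E}[Z_{ij}]=2\alpha_m\alpha_\ell=2\alpha_{m\ell}$ (after rounding, $2[\alpha_m][\alpha_\ell]/N^2$, which differs from $2\alpha_{m\ell}$ by $O(1/N)$ — this rounding slack is absorbed into $\epsilon$ for $N$ large).

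Next I apply the Bernstein inequality for independent bounded variables (Corollary~\ref{co:indepe} in the Appendix) to $\sum_{ij\in M_r}Z_{ij}$: for any $\delta>0$,
\begin{equation}
\mathbb{P}\Bigl(\Bigl|\tfrac{1}{N}\sum_{ij\in M_r}Z_{ij}-\alpha_{m\ell}\Bigr|>\delta\Bigr)\le 2e^{-c\delta^2 N},\nonumber
\end{equation}
where the constant $c$ depends only on the range $[0,2]$ and the variance bound, hence is uniform in $\alpha_{m\ell}\in[0,1]$ — this is the point flagged as needed for Theorem~\ref{th:explineconv}. Summing over $r=1,\dots,d^N$ via a union bound, and using that $\tfrac1N\mathbf{\widetilde{X}}_m^{N\,\top}A^N\mathbf{\widetilde{X}}_\ell^N=\sum_r \tfrac1N\sum_{ij\in M_r}Z_{ij}$ while $d^N[\alpha_{m\ell}]=\sum_r \alpha_{m\ell}$ up to $O(1/N)$, the triangle inequality gives
\begin{equation}
\mathbb{P}\Bigl(\Bigl|\tfrac1N\mathbf{\widetilde{X}}_m^{N\,\top}(t)A^N\mathbf{\widetilde{X}}_\ell^N(t)-d^N[\alpha_{m\ell}]\Bigr|>\epsilon\Bigr)\le 2d^N e^{-c(\epsilon/d^N)^2 N},\nonumber
\end{equation}
choosing $\delta=\epsilon/d^N$. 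If the degree sequence $d^N$ is bounded (the sparse regime, $d^N\le d_{\max}$), then $2d^N e^{-c(\epsilon/d^N)^2 N}\le 2e^{-kN}$ for a suitable $k=k(\epsilon,d_{\max})>0$ and all $N>N_0$, which is the claim; the normalization by $d$ rather than $d^N$ in the statement is handled by $d^N\to d$.

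The main obstacle I anticipate is bookkeeping around the rounding operator $[\cdot]$ and the interaction between the degree growth rate and the Bernstein exponent: the union bound over $d^N$ matchings costs a factor $d^N$ out front and, more importantly, shrinks the effective deviation to $\epsilon/d^N$, so the exponent degrades like $e^{-c\epsilon^2 N/(d^N)^2}$. Exponential concentration at a rate uniform in $N$ therefore genuinely requires $d^N=O(1)$ (or at worst $d^N=o(\sqrt N)$ with a weaker but still $o(1)$ bound); one must check that the paper's sparsity hypothesis indeed supplies this, and that the conditioning event has been threaded correctly through the independence argument so that Corollary~\ref{co:indepe} applies verbatim. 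A secondary, minor point is verifying that the conditional mean of each $Z_{ij}$ really is $2[\alpha_m][\alpha_\ell]/N^2$ under the definition of $\mathbf{\widetilde{X}}^N$ and that $\sum_r$ of these equals $d^N[\alpha_{m\ell}]$ up to an $O(d^N/N)$ error that vanishes relative to $\epsilon$ once $N>N_0$.
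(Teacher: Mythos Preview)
Your proposal is correct and follows essentially the same route as the paper: decompose the edge set of the $d^N$-regular bipartite graph into $d^N$ perfect matchings, apply Bernstein (Corollary~\ref{co:indepe}) to each matching sum of conditionally independent bounded terms, and finish with a union bound over the $d^N$ matchings to obtain the $2d^N e^{-kN}$ estimate. Your explicit handling of the symmetric term $\widetilde{X}_{im}\widetilde{X}_{j\ell}+\widetilde{X}_{jm}\widetilde{X}_{i\ell}$ and your remark that the exponent requires $d^N=O(1)$ (which is indeed the paper's standing assumption $d^N\to d$) are more careful than the paper's own write-up; the only slip is that the conditional mean of $Z_{ij}$ is $2[\alpha_m][\alpha_\ell]$, not $2[\alpha_m][\alpha_\ell]/N^2$.
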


\begin{proof}
As discussed in the Subsection~\ref{subsec:rewiring}, without loss of generality, we assume that~$A^N$ is a regular bipartite network with degree~$d^N$. One can thus partition the quadratic term~${\mathbf{\widetilde{X}}_m}^{N\,\top}(t)A^N {\mathbf{\widetilde{X}}_{\ell}}^{N}(t)$ into~$d^N$ sums of~$N$ independent terms
\begin{equation}
{\mathbf{\widetilde{X}}_m}^{N\,\top}(t)A^N {\mathbf{\widetilde{X}}_{\ell}}^{N}(t)= \sum_{i_1 j_1\in E_1} \widetilde{X}^N_{i_1 m} \widetilde{X}^N_{j_1 \ell} +\ldots + \sum_{i_d j_d \in E_d} \widetilde{X}^N_{i_dm} \widetilde{X}^N_{j_d\ell}\nonumber
\end{equation}
where each sum runs over a perfect matching and comprises~$N$ independent terms (as remarked before), where each term has mean~$\alpha_{m\ell}$. Thus,
\begin{eqnarray}
\mathbb{P}\left(\left|\frac{{\mathbf{\widetilde{X}}}_m^{N\,\top}(t)A^N {\mathbf{\widetilde{X}}_{\ell}}^{N}(t)}{N}-d^N \left[\alpha_{m\ell}\right]\right|>\epsilon \left|\overline{Y}_m^{N}(t_-)=\left[\alpha_{m}\right],\overline{Y}_{\ell}^{N}(t_-)=\left[\alpha_{\ell}\right]\right.\right) & \leq & \nonumber\\
\mathbb{P}\left(\left|\frac{\sum_{i_1 j_1\in E_1} \widetilde{X}^N_{i_1 m} \widetilde{X}^N_{j_1 \ell} +\ldots + \sum_{i_d j_d \in E_d} \widetilde{X}^N_{i_dm} \widetilde{X}^N_{j_d\ell}}{N}-d^N \left[\alpha_{m\ell}\right]\right|>\epsilon \left|\overline{Y}_m^{N}(t_-)=\left[\alpha_{m}\right],\overline{Y}_{\ell}^{N}(t_-)=\left[\alpha_{\ell}\right]\right.\right) & \leq & \nonumber\\
\mathbb{P}\left(\left|\frac{\sum_{i_1 j_1} \widetilde{X}^N_{i_1 m}\widetilde{X}^N_{j_1 \ell}}{N}-\left[\alpha_{m\ell}\right]\right| +\ldots + \left|\frac{\sum_{i_d j_d} \widetilde{X}^N_{i_d m}\widetilde{X}^N_{j_d \ell}}{N}-\left[\alpha_{m\ell}\right]\right|>\epsilon \left|\overline{Y}_m^{N}(t_-)=\left[\alpha_{m}\right],\overline{Y}_{\ell}^{N}(t_-)=\left[\alpha_{\ell}\right]\right.\right) & \leq & \nonumber \\
\mathbb{P}\left(\left|\frac{\sum_{i_1 j_1} \widetilde{X}^N_{i_1 m}\widetilde{X}^N_{j_1 \ell}}{N}-\left[\alpha_{m\ell}\right]\right|>\frac{\epsilon}{d^N}\left|\overline{Y}_m^{N}(t_-)=\left[\alpha_{m}\right],\overline{Y}_{\ell}^{N}(t_-)=\left[\alpha_{\ell}\right]\right.\right)+\ldots & &\nonumber\\
+\ldots + \mathbb{P}\left(\left|\frac{\sum_{i_d j_d}  \widetilde{X}^N_{i_d m}\widetilde{X}^N_{j_d \ell}}{N}-\left[\alpha_{m\ell}\right]\right|>\frac{\epsilon}{d^N} \left|\overline{Y}_m^{N}(t_-)=\left[\alpha_{m}\right],\overline{Y}_{\ell}^{N}(t_-)=\left[\alpha_{\ell}\right]\right.\right) & \leq & \nonumber \\
2d^N e^{-kN}\nonumber
\end{eqnarray}
for~$N$ large enough, where~$k$ is a function of the degree~$d^N$ and~$\epsilon$, but it does not depend on~$\alpha_{m\ell}$ (refer also to Corollary~\ref{co:indepe} in the Appendix); and the last inequality follows from the Bernstein concentration inequality.
\end{proof}

\subsection{Large Deviations on the Line for the Gap Process}\label{subsec:largegap}

Now, we observe that the main process~$\left(\mathbf{X}^{N}(t)\right)$ can be obtained (in distribution) from~$\left(\mathbf{\widetilde{X}}^{N}(t)\right)$ as follows: for any~$m\in\mathcal{X}$
\begin{enumerate}
  \item if~$\mathbf{1}^{\top}\mathbf{\widetilde{X}}_m^{N}(t) > \mathbf{1}^{\top}\mathbf{X}_m^{N}(t)$, then choose randomly~$\mathbf{1}^{\top}\mathbf{\widetilde{X}}_m^{N}(t)-\mathbf{1}^{\top}\mathbf{X}_m^{N}(t)$ of the~$1$'s of~$\left(\mathbf{\widetilde{X}}_m^{N}(t)\right)$ to flip to zero and declare the new vector as~$\mathbf{Z}_m^N(t)$;
  \item if~$\mathbf{1}^{\top}\mathbf{\widetilde{X}}_m^{N}(t) < \mathbf{1}^{\top}\mathbf{X}_m^{N}(t)$, then choose randomly $\mathbf{1}^{\top}\mathbf{X}_m^{N}(t)- \mathbf{1}^{\top}\mathbf{\widetilde{X}}_m^{N}(t)$ of the zero's of~$\left(\mathbf{\widetilde{X}}_m^{N}(t)\right)$ to flip to one and declare the new vector as~$\mathbf{Z}_m^N(t)$;
  \item if~$\mathbf{1}^{\top}\mathbf{\widetilde{X}}_m^{N}(t) = \mathbf{1}^{\top}\mathbf{X}_m^{N}(t)$, then set~$\mathbf{Z}_m^{N}(t)=\mathbf{\widetilde{X}}_m^{N}(t)$.
\end{enumerate}
Clearly,~$\mathbf{Z}^{N}(t)\overset{d}=\mathbf{X}^{N}(t)$ and the above construction couples both processes $\mathbf{X}^{N}(t)$ and $\mathbf{\widetilde{X}}^{N}(t)$ as we can write
\begin{equation}
\mathbf{\widetilde{X}}_m^{N}(t)+\mathbf{E}_m^N(t)\overset{d}=\mathbf{X}_m^{N}(t)\nonumber
\end{equation}
where the vector~$\mathbf{E}_m^N(t)\in \left\{-1,0,1\right\}^N$ flips the appropriate entries of the vector~$\mathbf{\widetilde{X}}_m^N(t)$, and the above equality holds in distribution for each~$m\in\left\{1,\ldots,K\right\}$. We have the theorem.

\begin{theorem}\label{th:coupling}
The following holds
\begin{equation}
\mathbb{P}\left(\left|\frac{\mathbf{X}_m^{N\,\top}(t)A^N \mathbf{X}_{\ell}^{N}(t)}{N}-\frac{{\mathbf{\widetilde{X}}_m}^{N\,\top}(t)A^N {\mathbf{\widetilde{X}}_{\ell}}^{N}(t)}{N}\right|> \epsilon \left|\mathbf{\overline{Y}}^{N}(t_-)\right.\right)\leq 2e^{-kN}.\nonumber
\end{equation}
where~$k$ does not depend on~$\mathbf{\overline{Y}}^N(t)$.
\end{theorem}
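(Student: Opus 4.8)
The plan is to use the coupling just built, under which $\mathbf{X}_m^N(t)\overset{d}{=}\mathbf{\widetilde{X}}_m^N(t)+\mathbf{E}_m^N(t)$ with $\mathbf{E}_m^N(t)\in\{-1,0,1\}^N$, and to show that the difference of the two bilinear forms is, up to the prefactor $d^N/N$, dominated by $\|\mathbf{E}_m^N(t)\|_1$ and $\|\mathbf{E}_\ell^N(t)\|_1$ --- which are themselves the deviations of two Binomial counts from their means and hence concentrate exponentially. Since $\mathbf{1}^\top\mathbf{Z}_m^N(t)=\mathbf{1}^\top\mathbf{X}_m^N(t)$ by construction, the macrostate is literally unchanged by the coupling, so I would run the whole estimate on the coupled space, substituting $\mathbf{Z}^N(t)=\mathbf{\widetilde{X}}^N(t)+\mathbf{E}^N(t)$ for $\mathbf{X}^N(t)$ while keeping the same conditioning on $\mathbf{\overline{Y}}^N(t_-)$.

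First, the deterministic step. Expanding,
\[
\mathbf{X}_m^{N\,\top}A^N\mathbf{X}_\ell^N-\mathbf{\widetilde{X}}_m^{N\,\top}A^N\mathbf{\widetilde{X}}_\ell^N
=\mathbf{E}_m^{N\,\top}A^N\mathbf{\widetilde{X}}_\ell^N+\mathbf{\widetilde{X}}_m^{N\,\top}A^N\mathbf{E}_\ell^N+\mathbf{E}_m^{N\,\top}A^N\mathbf{E}_\ell^N .
\]
Since $A^N$ is $d^N$-regular and $\mathbf{\widetilde{X}}_m^N,\mathbf{\widetilde{X}}_\ell^N\in\{0,1\}^N$, every entry of $A^N\mathbf{\widetilde{X}}_\ell^N$ lies in $[0,d^N]$ and every row of $A^N$ has sum $d^N$, so each of the three terms has absolute value at most $d^N\|\mathbf{E}_m^N\|_1$ or $d^N\|\mathbf{E}_\ell^N\|_1$, whence
\[
\left|\frac{\mathbf{X}_m^{N\,\top}A^N\mathbf{X}_\ell^N}{N}-\frac{\mathbf{\widetilde{X}}_m^{N\,\top}A^N\mathbf{\widetilde{X}}_\ell^N}{N}\right|\le\frac{d^N}{N}\big(2\|\mathbf{E}_m^N(t)\|_1+\|\mathbf{E}_\ell^N(t)\|_1\big).
\]
By the definition of the coupling, $\|\mathbf{E}_m^N(t)\|_1=\big|Y_m^N(t)-\widetilde{Y}_m^N(t)\big|$ with $\widetilde{Y}_m^N(t):=\mathbf{1}^\top\mathbf{\widetilde{X}}_m^N(t)$; note that only the number of flips, not which coordinates get flipped, enters here.

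Second, the probabilistic step. Conditioning further on $\mathbf{\overline{Y}}^N(t)$ --- which, together with $\mathbf{\overline{Y}}^N(t_-)$, fixes both the coupling target $Y_m^N(t)$ and the parameter $\overline{Y}_m^N(t_-)$ of the auxiliary Bernoulli variables --- the coordinates $\widetilde{X}_{1m}^N(t),\ldots,\widetilde{X}_{Nm}^N(t)$ are i.i.d.\ $\mathrm{Bernoulli}(\overline{Y}_m^N(t_-))$, so $\widetilde{Y}_m^N(t)$ is a sum of $N$ independent $\{0,1\}$ variables with conditional mean $Y_m^N(t_-)$, which differs from $Y_m^N(t)$ by at most $2$ (one interaction changes each state count by at most $2$). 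Hence, on the event that the right-hand side of the last display exceeds $\epsilon$, at least one of $\big|\widetilde{Y}_m^N(t)-Y_m^N(t_-)\big|,\big|\widetilde{Y}_\ell^N(t)-Y_\ell^N(t_-)\big|$ exceeds $c\,\epsilon N/d^N$ for an absolute constant $c>0$ and $N$ large. Applying the Bernstein inequality for independent variables already used in Lemma~\ref{lem:concentra} (Corollary~\ref{co:indepe}) to each of these two sums and union bounding gives a conditional bound of the form $4e^{-k'N}$; averaging over $\mathbf{\overline{Y}}^N(t)$ given $\mathbf{\overline{Y}}^N(t_-)$ preserves it, and taking $k=k'/2$ with $N$ large enough that $4e^{-k'N}\le 2e^{-kN}$ finishes the proof. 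Because the network is sparse ($d^N$ stays bounded, $d^N\to d$), the threshold $\epsilon N/d^N$ grows linearly in $N$, which is precisely what makes the tails exponentially small.

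The main obstacle --- and the point the statement explicitly flags --- is the \emph{uniformity} of $k$ over the conditioning value $\mathbf{\overline{Y}}^N(t_-)$, needed for the later time-uniform estimate (Theorem~\ref{th:explineconv}). This is exactly where Bernstein is the right tool: for a $\mathrm{Binomial}(N,p)$ deviation of size $\delta N$ the exponent is at least of order $\delta^2/\big(p(1-p)+\delta\big)\ge\delta^2/\big(\tfrac14+\delta\big)$, which does not degenerate as $p=\overline{Y}_m^N(t_-)$ approaches $0$ or $1$. The remaining care is bookkeeping: the $O(1)$ gap between $Y_m^N(t)$ and the conditional mean $Y_m^N(t_-)$ of $\widetilde{Y}_m^N(t)$, and the additional conditioning on $\mathbf{\overline{Y}}^N(t)$ that is needed to make $\widetilde{Y}_m^N(t)$ a genuine sum of i.i.d.\ variables before Bernstein is applied.
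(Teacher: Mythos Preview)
Your proposal is correct and follows essentially the same route as the paper: expand the difference of bilinear forms via the coupling $\mathbf{X}_m^N=\mathbf{\widetilde{X}}_m^N+\mathbf{E}_m^N$, bound each cross term by $d^N$ times $\|\mathbf{E}^N\|_1=|\mathbf{1}^\top\mathbf{E}^N|$ (these coincide since the coupling forces all nonzero entries of $\mathbf{E}_m^N$ to share a sign), and then apply Bernstein to the Binomial deviation $|\mathbf{1}^\top\mathbf{\widetilde{X}}_m^N(t)-\alpha_m N|$ uniformly in~$\alpha_m$. Your handling of the $t$ versus $t_-$ discrepancy (the at-most-$2$ shift in $Y_m^N$ across a single interaction, and the extra conditioning on $\mathbf{\overline{Y}}^N(t)$) is more careful than the paper, which tacitly identifies $\mathbf{1}^\top\mathbf{X}_m^N(t)$ with $\alpha_m N$ under conditioning on $\mathbf{\overline{Y}}^N(t_-)=\alpha$.
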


\begin{proof}
We get successively
\begin{eqnarray}
& \mathbb{P}\left(\left|\frac{\mathbf{X}_m^{N\,\top}(t)A \mathbf{X}_{\ell}^{N}(t)}{N}-\frac{\mathbf{\widetilde{X}}_{m}^{N\,\top}(t)A \mathbf{\widetilde{X}}_{\ell}^{N}(t)}{N}\right|> \epsilon \left|\mathbf{\overline{Y}}^{N}(t_-)=\mathbf{\alpha}\right.\right) & = \nonumber\\
& \mathbb{P}\left(\left|\frac{\left(\mathbf{\widetilde{X}}_{m}^{N\,\top}(t)+\mathbf{E}_m^N(t)\right)^{\top}A \left(\mathbf{\widetilde{X}}_{\ell}^{N\,\top}(t)+\mathbf{E}_{\ell}^N(t)\right)}{N}-\frac{\mathbf{\widetilde{X}}_{m}^{N\,\top}(t)A \mathbf{\widetilde{X}}_{\ell}^{N}(t)}{N}\right|> \epsilon \left|\mathbf{\overline{Y}}^{N}(t_-)=\mathbf{\alpha}\right.\right) & =\nonumber\\
& \mathbb{P}\left(\left|\frac{\mathbf{E}_{m}^{N\,\top}(t)A \mathbf{\widetilde{X}}_{\ell}^{N}(t)}{N}+\frac{\mathbf{\widetilde{X}}_m^{N\,\top}(t)A \mathbf{E}_{\ell}^{N}(t)}{N}+\frac{\mathbf{E}_m^{N\,\top}(t)A \mathbf{E}_{\ell}^{N}(t)}{N}\right|> \epsilon \left|\mathbf{\overline{Y}}^{N}(t_-)=\mathbf{\alpha}\right.\right) & \leq \nonumber\\
& \mathbb{P}\left(\left|\frac{\mathbf{E}_{m}^{N\,\top}(t)A \mathbf{\widetilde{X}}_{\ell}^{N}(t)}{N}\right| >\frac{\epsilon}{3} \left|\mathbf{\overline{Y}}^{N}(t-)=\mathbf{\alpha}\right.\right)+\mathbb{P}\left(\left|\frac{\mathbf{\widetilde{X}}_{m}^{N\,\top}(t)A \mathbf{E}_{\ell}^{N}(t)}{N}\right| >\frac{\epsilon}{3} \left|\mathbf{\overline{Y}}^{N}(t_-)=\mathbf{\alpha}\right.\right) & \nonumber\\
& + \mathbb{P}\left(\left|\frac{\mathbf{E}_{m}^{N\,\top}(t)A \mathbf{E}_{\ell}^{N}(t)}{N}\right| >\frac{\epsilon}{3} \left|\mathbf{\overline{Y}}^{N}(t_-)=\mathbf{\alpha}\right.\right) & \leq\nonumber\\
& \mathbb{P}\left(d^N \left|\frac{\mathbf{E}_{m}^{N\,\top}(t)\mathbf{1}}{N}\right| >\frac{\epsilon}{3} \left|\mathbf{\overline{Y}}^{N}(t_-)=\mathbf{\alpha}\right.\right)+\mathbb{P}\left(d^N \left|\frac{\mathbf{1}^{\top} \mathbf{E}_{\ell}^{N}(t)}{N}\right| >\frac{\epsilon}{3} \left|\mathbf{\overline{Y}}^{N}(t_-)=\mathbf{\alpha}\right.\right) & \nonumber\\
& + \mathbb{P}\left(d^N \left|\frac{\mathbf{E}_{m}^{N\,\top}(t)\mathbf{1}}{N}\right| >\frac{\epsilon}{3} \left|\mathbf{\overline{Y}}^{N}(t_-)=\mathbf{\alpha}\right.\right). & \nonumber
\end{eqnarray}
Each term on the right hand side of the last inequality can be bounded as follows
\begin{eqnarray}
& \mathbb{P}\left(d^N\left|\frac{\mathbf{E}_{i}^{N\,\top}(t)\mathbf{1}}{N}\right|> \frac{\epsilon}{3} \left|\mathbf{\overline{Y}}^{N}(t_-)=\mathbf{\alpha}\right.\right) & =\nonumber\\
& \mathbb{P}\left(\left|\frac{\mathbf{1}^{\top} \left(\mathbf{\widetilde{X}}_i^N(t)-\mathbf{X}_i^N(t)\right)}{N}\right|> \frac{\epsilon}{3 d^N} \left|\mathbf{\overline{Y}}^{N}(t_-)=\mathbf{\alpha}\right.\right) & =\nonumber\\
& \mathbb{P}\left(\left|\frac{ \left(\mathbf{1}^{\top}\mathbf{\widetilde{X}}_i^N(t)- \alpha_i N\right)}{N}\right|> \frac{\epsilon}{3 d^N} \left|\mathbf{\overline{Y}}^{N}(t_-)=\mathbf{\alpha}\right.\right) & =\nonumber\\
& \mathbb{P}\left(\left|\frac{\left(\mathbf{1}^{\top}\mathbf{\widetilde{X}}_i^N(t) \right)}{N}-\alpha_i\right|> \frac{\epsilon}{3 d^N} \left|\mathbf{\overline{Y}}^{N}(t_-)=\mathbf{\alpha}\right.\right) & \leq\nonumber\\
& 2 e^{-kN}\nonumber
\end{eqnarray}
for any~$\alpha$, where~$k$ does not depend on~$\alpha$. And the theorem follows.
\end{proof}

The next theorem follows as corollary to Lemma~\ref{lem:concentra} and Theorem~\ref{th:coupling}.

\begin{theorem}\label{th:explineconv}
We have
\begin{equation}
\mathbb{P}\left(\left|\frac{\mathbf{X}_m^{N\,\top}(t)A^N \mathbf{X}_{\ell}^{N}(t)}{N}-d^N \overline{Y}_m^{N}(t)\overline{Y}_{\ell}^{N}(t)\right|>\epsilon \right)\leq M e^{-kN},\nonumber
\end{equation}
for all~$t\geq 0$, and some~$M>0$.
\end{theorem}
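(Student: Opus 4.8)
The result is advertised as a corollary of Lemma~\ref{lem:concentra} and Theorem~\ref{th:coupling}, so the plan is: condition on the macrostate immediately before $t$, split the gap into three pieces, control the first by the coupling estimate (Theorem~\ref{th:coupling}) and the second by the Bernstein estimate (Lemma~\ref{lem:concentra}), dispatch the third deterministically using sparsity, and then integrate out the conditioning because every bound is uniform in the conditioning value.

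First I would fix $t>0$ and condition on $\left\{\mathbf{\overline{Y}}^N(t_-)=\boldsymbol\alpha\right\}$, where $\boldsymbol\alpha$ ranges over the rescaled integer lattice, so each $\alpha_m N\in\mathbb{N}$ and hence $[\alpha_m]=\alpha_m$. On that event,
\begin{align}
\left|\frac{\mathbf{X}_m^{N\,\top}(t)A^N\mathbf{X}_{\ell}^N(t)}{N}-d^N\overline{Y}_m^N(t)\overline{Y}_{\ell}^N(t)\right|
&\le \underbrace{\left|\frac{\mathbf{X}_m^{N\,\top}(t)A^N\mathbf{X}_{\ell}^N(t)}{N}-\frac{\widetilde{\mathbf{X}}_m^{N\,\top}(t)A^N\widetilde{\mathbf{X}}_{\ell}^N(t)}{N}\right|}_{=:I_N}\nonumber\\
&\quad+\underbrace{\left|\frac{\widetilde{\mathbf{X}}_m^{N\,\top}(t)A^N\widetilde{\mathbf{X}}_{\ell}^N(t)}{N}-d^N\alpha_m\alpha_{\ell}\right|}_{=:II_N}\nonumber\\
&\quad+\underbrace{d^N\left|\alpha_m\alpha_{\ell}-\overline{Y}_m^N(t)\overline{Y}_{\ell}^N(t)\right|}_{=:III_N},\nonumber
\end{align}
so by a union bound it suffices to show that $\mathbb{P}(I_N>\epsilon/3\mid\mathbf{\overline{Y}}^N(t_-)=\boldsymbol\alpha)$ and $\mathbb{P}(II_N>\epsilon/3\mid\mathbf{\overline{Y}}^N(t_-)=\boldsymbol\alpha)$ decay exponentially at a rate uniform in $\boldsymbol\alpha$, and that $III_N<\epsilon/3$ for $N$ large.

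For $I_N$, Theorem~\ref{th:coupling} directly gives $\mathbb{P}(I_N>\epsilon/3\mid\mathbf{\overline{Y}}^N(t_-)=\boldsymbol\alpha)\le 2e^{-k_1N}$ with $k_1$ independent of $\boldsymbol\alpha$. For $II_N$, on $\{\mathbf{\overline{Y}}^N(t_-)=\boldsymbol\alpha\}$ the columns $\widetilde{\mathbf{X}}_m^N(t),\widetilde{\mathbf{X}}_\ell^N(t)$ are arrays of independent Bernoulli variables with parameters $\alpha_m,\alpha_\ell$, so the conditional law of $II_N$ depends on $\boldsymbol\alpha$ only through $(\alpha_m,\alpha_\ell)$; since $[\alpha_m]=\alpha_m$ and $[\alpha_\ell]=\alpha_\ell$, Lemma~\ref{lem:concentra} applies verbatim, giving $\mathbb{P}(II_N>\epsilon/3\mid\mathbf{\overline{Y}}^N(t_-)=\boldsymbol\alpha)\le 2e^{-k_2N}$ with $k_2$ independent of $\boldsymbol\alpha$. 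For $III_N$, a single peer-to-peer update changes any occupancy count by at most $2$ (because $|c_{pq}(k)|\le 2$), so $|\overline{Y}_m^N(t)-\alpha_m|\le 2/N$ and $|\overline{Y}_\ell^N(t)-\alpha_\ell|\le 2/N$; since the four numbers lie in $[0,1]$ this yields $|\alpha_m\alpha_\ell-\overline{Y}_m^N(t)\overline{Y}_\ell^N(t)|\le 4/N$, hence $III_N\le 4d^N/N$, which is below $\epsilon/3$ for all $N>N_0$ (some $N_0$ depending only on $\epsilon$) because the network is sparse, $d^N$ bounded.

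Combining, for $N>N_0$ the conditional probability in the theorem is at most $2e^{-k_1N}+2e^{-k_2N}$, uniformly in $\boldsymbol\alpha$; taking expectation over $\mathbf{\overline{Y}}^N(t_-)$ removes the conditioning, and enlarging the constant to absorb the finitely many $N\le N_0$ gives the claimed $Me^{-kN}$ with $k=\min\{k_1,k_2\}$. Since no step used the particular value of $t$, this holds for all $t>0$ (the case $t=0$ following from the assumption on the initial configuration). The computation is largely routine; the point requiring care — which I would flag as the main obstacle — is keeping the exponential rates uniform over the conditioning value $\boldsymbol\alpha$, which is precisely why Lemma~\ref{lem:concentra} and Theorem~\ref{th:coupling} were stated with the clauses ``$k$ does not depend on $\alpha_{m\ell}$ / $\mathbf{\overline{Y}}^N$'', so that the conditioning can be integrated out; the secondary requirement is the sparsity input $d^N=o(N)$ needed to make $III_N$ negligible.
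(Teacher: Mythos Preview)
Your proof is correct and follows essentially the same approach as the paper: condition on $\mathbf{\overline{Y}}^N(t_-)$, split using the auxiliary process, invoke Theorem~\ref{th:coupling} and Lemma~\ref{lem:concentra} with their uniformity in $\boldsymbol\alpha$, then integrate the conditioning out. The only difference is that you isolate the extra term $III_N$ explicitly, whereas the paper silently identifies $\overline{Y}^N(t)$ with $\overline{Y}^N(t_-)$ (which is legitimate since for fixed $t$ there is almost surely no jump at $t$, so $III_N=0$ a.s.); your deterministic bound $III_N\le 4d^N/N$ is harmless over-caution rather than a departure in method.
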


\begin{proof}
\begin{eqnarray}
& \mathbb{P}\left(\left|\frac{\mathbf{X}_m^{N\,\top}(t)A^N \mathbf{X}_{\ell}^{N}(t)}{N}-d^N \overline{Y}_m^{N}(t)\overline{Y}_{\ell}^{N}(t)\right|>\epsilon \right) & = \nonumber\\
& E\left[\mathbb{P}\left(\left|\frac{\mathbf{X}_m^{N\,\top}(t)A^N \mathbf{X}_{\ell}^{N}(t)}{N}-d^N \overline{Y}_m^{N}(t)\overline{Y}_{\ell}^{N}(t)\right|>\epsilon \left| \mathbf{\overline{Y}}^N (t_-) \right.\right)\right] & =\nonumber\\
& E\left[\mathbb{P}\left(\left|\frac{\mathbf{X}_m^{N\,\top}(t)A^N \mathbf{X}_{\ell}^{N}(t)-\mathbf{\widetilde{X}}_m^{N\,\top}(t)A^N \mathbf{\widetilde{X}}_{\ell}^{N}(t)}{N}+\frac{\mathbf{\widetilde{X}}_{m}^{N\,\top}(t)A \mathbf{\widetilde{X}}_{\ell}^{N}(t)}{N}-d^N \overline{Y}_m^{N}(t)\overline{Y}_{\ell}^{N}(t)\right|>\epsilon \left| \mathbf{\overline{Y}}^N(t_-) \right.\right)\right] & \leq\nonumber\\
& E\left[\mathbb{P}\left(\left|\frac{\mathbf{X}_m^{N\,\top}(t)A \mathbf{X}_{\ell}^{N}(t)-\mathbf{\widetilde{X}}_{m}^{N\,\top}(t)A \mathbf{\widetilde{X}}_{\ell}^{N}(t)}{N}\right|>\epsilon \left| \mathbf{\overline{Y}}^N(t_-) \right.\right)\right] & \nonumber\\
& +E\left[\mathbb{P}\left(\left|\frac{\mathbf{\widetilde{X}}_m^{N\,\top}(t)A^N \mathbf{\widetilde{X}}_{\ell}^{N}(t)}{N}-d^N \overline{Y}_m^{N}(t)\overline{Y}_{\ell}^{N}(t)\right|>\epsilon \left| \mathbf{\overline{Y}}^N(t_-) \right.\right)\right] & \leq\nonumber\\
& M e^{-kN}, &\nonumber
\end{eqnarray}
where the last inequality follows from Lemma~\ref{lem:concentra}, Theorem~\ref{th:coupling}, and the fact that~$k$ does not depend on~$\left(\mathbf{\overline{Y}}^N(t_-)\right)$.
\end{proof}

%
%

\subsection{Tightness of the Gap Process}\label{subsec:tightgap}

The following theorem is crucial to what follows
\begin{theorem}\label{th:poissondecay}
Let~$M^{N}\overset{d}\sim \mathcal{N}_N\left(0,1\right)$ be a Poisson random variable with parameter~$N$. Let~$\left(Z^N_{i}\right)_{i=1}^{\infty}$ be a sequence of independent (and independent of~$M^{N}$) Bernoulli random variables with law
\begin{equation}
\mathbb{P}\left(Z^N_{i}=1\right)=\frac{1}{N^\alpha},\nonumber
\end{equation}
for all~$i\in\mathbb{N}$, with~$\alpha>1$. Then,
\begin{equation}
\sum_{i=0}^{M^N} Z^N_{i}\overset{\mathbb{P}}\longrightarrow 0,\nonumber
\end{equation}
or equivalently,
\begin{equation}
\mathbb{P}\left(\sum_{i=0}^{M^N} Z^N_{i}\geq 1\right)\longrightarrow 0.\nonumber
\end{equation}
as~$N$ goes to infinite.
\end{theorem}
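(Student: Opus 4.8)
The plan is to condition on $M^N$ and apply a first‑moment (union) bound, which reduces the statement to the elementary fact that $N^{1-\alpha}\to 0$ when $\alpha>1$. Write $S^N:=\sum_{i=0}^{M^N}Z_i^N$. Since $S^N$ is a nonnegative integer‑valued random variable, $\{S^N\neq 0\}=\{S^N\geq 1\}$, so the two displayed conclusions are indeed equivalent, and it suffices to show $\mathbb{P}(S^N\geq 1)\to 0$.

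First I would condition on the value of the Poisson variable. Because $M^N$ is independent of the i.i.d.\ Bernoulli family $(Z_i^N)_i$, on the event $\{M^N=m\}$ the sum $S^N$ consists of exactly $m+1$ independent Bernoulli$(N^{-\alpha})$ terms, i.e.\ it is conditionally Binomial$(m+1,N^{-\alpha})$. Hence, using the standard bound $1-(1-p)^n\leq np$,
\begin{equation}
\mathbb{P}\!\left(S^N\geq 1\,\middle|\,M^N=m\right)=1-\left(1-N^{-\alpha}\right)^{m+1}\leq (m+1)\,N^{-\alpha}.\nonumber
\end{equation}

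Next I would integrate this bound against the law of $M^N$. Taking expectations and using $\mathbb{E}[M^N]=N$,
\begin{equation}
\mathbb{P}\!\left(S^N\geq 1\right)=\mathbb{E}\!\left[\,\mathbb{P}\!\left(S^N\geq 1\,\middle|\,M^N\right)\right]\leq N^{-\alpha}\,\mathbb{E}\!\left[M^N+1\right]=N^{-\alpha}(N+1)=N^{1-\alpha}+N^{-\alpha}.\nonumber
\end{equation}
Since $\alpha>1$, the right‑hand side tends to $0$ as $N\to\infty$, which gives $\mathbb{P}(S^N\geq 1)\to 0$, equivalently $S^N\overset{\mathbb{P}}\longrightarrow 0$.

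There is essentially no hard step here: the only point requiring a little care is the bookkeeping of the index range, namely that $\sum_{i=0}^{M^N}$ contains $M^N+1$ (rather than $M^N$) terms, which is harmless. As an alternative one may invoke Poisson thinning: $\sum_{i=1}^{M^N}Z_i^N$ is exactly Poisson with parameter $N\cdot N^{-\alpha}=N^{1-\alpha}$, so $\mathbb{P}(\sum_{i=1}^{M^N}Z_i^N\geq 1)=1-e^{-N^{1-\alpha}}\leq N^{1-\alpha}\to 0$, and adding the single extra Bernoulli term $Z_0^N$ (which is $0$ with probability $1-N^{-\alpha}\to 1$) via a union bound yields the same conclusion. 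I would present the direct conditioning argument as the main proof and mention the thinning viewpoint as a remark.
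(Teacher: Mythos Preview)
Your proof is correct. Both you and the paper begin by conditioning on $M^N$ and writing
\[
\mathbb{P}\!\left(S^N\geq 1\,\middle|\,M^N\right)=1-\left(1-N^{-\alpha}\right)^{M^N(+1)},
\]
but from there the routes diverge. The paper computes the expectation \emph{exactly}: it rewrites $(1-N^{-\alpha})^{M^N}=e(N)^{M^N/N^{\alpha}}$ and sums the Poisson series in closed form using the exponential moment generating function, arriving at $\mathbb{P}(S^N\geq 1)=1-e^{-N^{1-\alpha}}\to 0$. You instead apply the elementary union bound $1-(1-p)^{n}\leq np$ before taking expectation, which reduces everything to $\mathbb{E}[M^N+1]\,N^{-\alpha}=N^{1-\alpha}+N^{-\alpha}\to 0$. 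Your approach is shorter and more robust (it would work for any $M^N$ with $\mathbb{E}[M^N]=O(N)$, not just Poisson), while the paper's computation yields the exact tail probability and thereby the sharp rate. Your Poisson--thinning remark is in fact precisely the closed form the paper obtains, so the two are closer than they may appear. Your care with the extra $i=0$ term is also cleaner than the paper, which tacitly drops it.
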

The idea behind this theorem is that~$Z^N_{i}$ will play the role of the indicator of an $\epsilon$-deviation in our gap process
\begin{equation}
\left(R^N_{m\ell}(t)\right)=\left(\frac{\mathbf{X}_m^N(t)A^N\mathbf{X}_{\ell}^N(t)}{N}-\overline{Y}_{m}^N(t)\overline{Y}_{\ell}^N(t)\right).\nonumber
\end{equation}
Thus, the theorem states that the probability that there will be at least one $\epsilon$-deviation during the whole time interval~$\left[0,T\right]$ (i.e., across all shuffles in~$\left[0,T\right]$) decreases to zero as~$N$ grows large (as stated formally in Theorem~\ref{th:sup}).

\begin{proof}
First note that
\begin{eqnarray}
\mathbb{P}\left(\left.\sum_{i=0}^{M^N} Z^N_{i}\geq 1\right|M^N\right) & = & 1-\mathbb{P}\left(\left.\sum_{i=0}^{M^N} Z^N_{i} = 0\right|M^N\right)= 1-\mathbb{P}\left(\left.Z^N_{i} = 0\,\,\forall{i\leq M^N}\right|M^N\right)\nonumber\\
& = & 1-\left(1-\frac{1}{N^{\alpha}}\right)^{M^N} = 1-\left(\left(1-\frac{1}{N^{\alpha}}\right)^{N^{\alpha}}\right)^{M^N/N^{\alpha}}\nonumber\\
& = & 1-e(N)^{M^N/N^{\alpha}},\nonumber
\end{eqnarray}
where we defined
\begin{equation}
e(N):=\left(1-\frac{1}{N^{\alpha}}\right)^{N^{\alpha}}.\nonumber
\end{equation}
Now,
\begin{equation}
\mathbb{P}\left(\sum_{i=0}^{M^N} Z^N_{i}\geq 1\right)= E\left[\mathbb{P}\left(\left.\sum_{i=0}^{M^N} Z^N_{i}\geq 1\right|M^N\right)\right]=
\sum_{k\geq 0} \left(1-e(N)^{k/N^{\alpha}}\right) \frac{N^{k} e^{-N}}{k!}.\nonumber
\end{equation}
We have that
\begin{eqnarray}
e^{-N}\sum_{k}\left(1-e(N)^{k/N^{\alpha}}\right) \frac{N^{k}}{k!} & = & e^{-N}\left(\sum_{k}\frac{N^{k}}{k!} - \sum_{k}e(N)^{k/N^{\alpha}}\frac{N^{k}}{k!} \right)\nonumber\\
& = & 1-e^{-N}\sum_{k}e(N)^{k/N^{\alpha}}\frac{N^{k}}{k!}=1-e^{-N}\times e^{e(N)^{1/N^{\alpha}}N}\nonumber\\
& = & 1-e^{-N}\times e^{\left(1-\frac{1}{N^{\alpha}}\right)N}\nonumber\\
& = & 1- e^{-\frac{N}{N^{\alpha}}}\nonumber\\
& \overset{N\rightarrow \infty}\longrightarrow & 0,\nonumber
\end{eqnarray}
for any~$\alpha>1$.
\end{proof}

The next theorem is the main result of this section.

\begin{theorem}\label{th:sup}
We have
\begin{equation}
\lim_{N\rightarrow \infty}\mathbb{P}\left(\sup_{t\in\left[0,T\right]}\left|\frac{\mathbf{X}_m^{N\,\top}(t)A^N \mathbf{X}_{\ell}^{N}(t)}{N}-d^N \overline{Y}_{m}^{N}(t) \overline{Y}_{\ell}^{N}(t)\right|>\epsilon\right)=0,\nonumber
\end{equation}
for any~$\epsilon>0$.
\end{theorem}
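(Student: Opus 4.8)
The plan is to upgrade the pointwise-in-$t$ exponential concentration of the gap process $R^N_{m\ell}$ already established in Theorem~\ref{th:explineconv} to a statement uniform over $t\in[0,T]$, exploiting that $R^N_{m\ell}$ changes only at the interaction times and that there are only $O(N)$ of these in $[0,T]$. Under the ``$A^N$ fixed, node positions permuted'' viewpoint of Subsection~\ref{subsec:rewiring}, the only times at which $\mathbf{X}^N$ (hence $R^N_{m\ell}$) changes are the interaction times $0<\tau_1<\tau_2<\cdots$, which are a.s.\ locally finite; thus $R^N_{m\ell}(\cdot)$ is constant on $[0,\tau_1)$ and on each $[\tau_n,\tau_{n+1})$, so
\begin{equation}
\left\{\sup_{t\in[0,T]}\left|R^N_{m\ell}(t)\right|>\epsilon\right\}\;\subseteq\;\left\{\left|R^N_{m\ell}(0)\right|>\epsilon\right\}\;\cup\;\bigcup_{n\geq 1}\Bigl(\{\tau_n\leq T\}\cap\bigl\{\,|R^N_{m\ell}(\tau_n)|>\epsilon\,\bigr\}\Bigr).\nonumber
\end{equation}
The $t=0$ term is dealt with separately and is harmless once one starts from a well-mixed initial microstate (e.g.\ a product configuration), in which case $\mathbb{P}(|R^N_{m\ell}(0)|>\epsilon)\to 0$ by the same Bernstein bound underlying Lemma~\ref{lem:concentra}.

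\textbf{Counting the events.} Since $A^N$ is $d^N$-regular there are $Nd^N$ exponential clocks, each of rate at most $\gamma_{\max}:=\max_{m\ell}\gamma_{m\ell}$, so the counting process $\mathcal{N}(\cdot)$ of interactions is stochastically dominated by a Poisson process of rate $Nd^N\gamma_{\max}$; since $d^N\to d<\infty$ this gives $\mathbb{E}[\mathcal{N}(T)]\leq Nd^N\gamma_{\max}T\leq cN$ for $N$ large. This is precisely the regime of Theorem~\ref{th:poissondecay}: $\mathcal{N}(T)$ plays the part of the Poisson ``clock'' (with parameter $\Theta(N)$ rather than $N$, a harmless rescaling), while the indicator of an $\epsilon$-deviation of $R^N_{m\ell}$ at an interaction event plays the part of $Z^N_i$, having probability $\leq Me^{-kN}\leq N^{-\alpha}$ for any fixed $\alpha>1$ and $N$ large by Theorem~\ref{th:explineconv}.

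\textbf{Conclusion by conditioning.} As the successive deviation indicators are not independent, I would finish by a first-moment argument rather than quoting Theorem~\ref{th:poissondecay} verbatim. Conditioning on the pre-$\tau_n$ history $\mathcal{F}_{\tau_n-}$: the event $\{\tau_n\leq T\}$ is $\mathcal{F}_{\tau_n-}$-measurable, and, given $\mathcal{F}_{\tau_n-}$, the coupling of $\mathbf{X}^N$ with the auxiliary process $\mathbf{\widetilde{X}}^N$ (Theorem~\ref{th:coupling}) together with the conditional concentration of the auxiliary quadratic form (Lemma~\ref{lem:concentra}) --- i.e.\ the conditional version of Theorem~\ref{th:explineconv}, whose constant $k$ depends on neither the macrostate nor $n$ --- give $\mathbb{P}\bigl(|R^N_{m\ell}(\tau_n)|>\epsilon \mid \mathcal{F}_{\tau_n-}\bigr)\leq Me^{-kN}$. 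Taking expectations in the inclusion above and using $\sum_{n\geq 1}\mathbb{P}(\tau_n\leq T)=\mathbb{E}[\mathcal{N}(T)]$,
\begin{equation}
\mathbb{P}\left(\sup_{t\in[0,T]}|R^N_{m\ell}(t)|>\epsilon\right)\;\leq\;\mathbb{P}\bigl(|R^N_{m\ell}(0)|>\epsilon\bigr)\;+\;Me^{-kN}\,\mathbb{E}[\mathcal{N}(T)]\;\leq\;\mathbb{P}\bigl(|R^N_{m\ell}(0)|>\epsilon\bigr)\;+\;cN\,Me^{-kN},\nonumber
\end{equation}
and both terms go to $0$ as $N\to\infty$.

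\textbf{Expected main obstacle.} The substantive input --- exponential-in-$N$ concentration of $R^N_{m\ell}$ at a single event --- is already available through Theorems~\ref{th:coupling} and~\ref{th:explineconv}; what remains is bookkeeping, and its two delicate points are: (i) the uniformity of the constant $k$ in the conditional bound, both in the current macrostate (this is exactly the ``$k$ does not depend on $\alpha_{m\ell}$'' clause of Lemma~\ref{lem:concentra}) and across $n$, without which a union bound over $O(N)$ events would not close; and (ii) the estimate $\mathbb{E}[\mathcal{N}(T)]=O(N)$, which hinges on the degree $d^N$ staying bounded --- note that the rate $k$ of Theorem~\ref{th:explineconv} itself degrades like $\epsilon/d^N$, so a growing degree would simultaneously slow the per-event decay and inflate the number of events, breaking the argument. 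A genuine (if application-benign) assumption on the initial configuration is needed to kill the $R^N_{m\ell}(0)$ term.
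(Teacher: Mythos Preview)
Your argument is correct and follows the same skeleton as the paper's: the gap process is piecewise constant, there are $O(N)$ jump times in $[0,T]$ (dominated by a Poisson of rate $d^N N\gamma_{\max}$), and at each jump the deviation probability is $\leq Me^{-kN}$ uniformly in the macrostate, so a union over $O(N)$ events with exponentially small mass does the job.

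Where you diverge from the paper is in the finishing step. The paper routes the conclusion through Theorem~\ref{th:poissondecay}, replacing the actual deviation indicators $Z^N(t_i)$ by an i.i.d.\ Bernoulli sequence $(Z^N_i)$ with parameter $N^{-\alpha}$ and computing $\mathbb{P}\bigl(\sum_{i\leq M^N}Z^N_i\geq 1\bigr)$ explicitly; the passage from the dependent $Z^N(t_i)$ to the i.i.d.\ $Z^N_i$ is left implicit and relies on exactly the uniform conditional bound you isolate. Your first-moment argument, conditioning on $\mathcal{F}_{\tau_n-}$ and summing $\mathbb{P}(\tau_n\leq T)$ to recover $\mathbb{E}[\mathcal{N}(T)]$, is more elementary and makes that step transparent---it avoids Theorem~\ref{th:poissondecay} altogether and sidesteps the independence issue you correctly flag. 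You also make explicit the need to control $R^N_{m\ell}(0)$ separately (the paper's ``$\sum_{i=0}^{M^N}$'' quietly absorbs this by treating $t=0$ as if a shuffle had just occurred). Your identification of the two load-bearing hypotheses---uniformity of $k$ in the macrostate (the clause in Lemma~\ref{lem:concentra}) and boundedness of $d^N$---is exactly right.
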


\begin{proof}
Let~$M^N \sim \mathcal{N}_{d^N N}\left(0,T\right)$ be a Poisson random variable with parameter~$d^N N$ and let~$\widehat{M}^N$ count the number of interactions (i.e., a state change happens) across the time interval~$\left[0,T\right]$. Set
\begin{equation}
Z^N(t):=\mathbf{1}_{\left\{\left|\frac{\mathbf{X}_m^{N\,\top}(t)A^N \mathbf{X}_{\ell}^{N}(t)}{N}-d^N \overline{Y}_{m}^{N}(t) \overline{Y}_{\ell}^{N}(t)\right|>\epsilon\right\}}(t)\nonumber
\end{equation}
to be the indicator of an $\epsilon$-deviation in the gap process. Now, note that under an appropriate coupling
\begin{equation}
\widehat{M}^N:=\mbox{Number of actual shuffles during} \left[0,T\right] \leq_{a.s.} M^N \overset{d}\sim \mathcal{N}_{\lambda(N)}\left(0,T\right)\nonumber
\end{equation}
where~$\lambda(N)=d^N N$, and~$d^N$ is the degree of the network~$A^N$. In particular, the intensity~$\lambda$ of the Poisson upper-bounding the number of shuffles on the interval increases linearly with~$N$. It follows that
\begin{eqnarray}
\mathbb{P}\left(\sup_{t\in\left[0,T\right]}\left|\frac{\mathbf{X}_m^{N\,\top}(t)A^N \mathbf{X}_{\ell}^{N}(t)}{N}-d^N \overline{Y}_{m}^{N}(t) \overline{Y}_{\ell}^{N}(t)\right|>\epsilon\right) & = & \mathbb{P}\left(\sum_{i=1}^{\widehat{M}^N}Z^N(t_i)\geq 1\right) \nonumber\\
& \leq & \mathbb{P}\left(\sum_{i=0}^{M^N} Z^N_{i}\geq 1\right) \overset{N\rightarrow \infty}\longrightarrow 0\nonumber
\end{eqnarray}
where the last inequality follows from Theorem~\ref{th:poissondecay} and the large deviation on the line, Theorem~\ref{th:explineconv}.
\end{proof}


%
%
%

\subsection{Martingale Converges in Probability to Zero}\label{subsec:martconverges}

In this subsection, we prove the following theorem.
\begin{theorem}\label{lm:martconv}
For any~$\epsilon>0$, the following holds
\begin{equation}
\mathbb{P}\left(\sup_{\left[0,T\right]}\left|\overline{M}_k^N(t)\right|>\epsilon\right)\overset{N\rightarrow \infty}\longrightarrow 0\nonumber
\end{equation}
for each~$k\in\mathcal{X}$ and~$T\geq 0$.
\end{theorem}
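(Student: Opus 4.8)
The martingale $\bigl(\overline{M}_k^N(t)\bigr)$ from \eqref{eq:pathwise} is a mean-zero, purely discontinuous martingale with small jumps: a peer-to-peer interaction between nodes in states $m$ and $\ell$ changes $Y_k^N$ by $c_{m\ell}(k)\in\{-2,-1,0,1,2\}$, while a rewiring $A^N(t)=P^\top A^N(t_-)P$ leaves $\overline{Y}_k^N$ unchanged (a permutation does not alter the number of nodes in state $k$). Since the compensator in \eqref{eq:pathwise} is absolutely continuous, every jump of $\bigl(\overline{Y}_k^N(t)\bigr)$ — hence every jump of $\bigl(\overline{M}_k^N(t)\bigr)$ — has magnitude at most $2/N$. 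Also $\overline{M}_k^N(0)=0$, and $\overline{M}_k^N$ has bounded paths (both $\overline{Y}_k^N(t)\in[0,1]$ and the compensator, bounded by $C d^N T$ using $\mathbf X_m^{N\top}A^N\mathbf X_\ell^N\le d^N N$, are bounded), so $\overline{M}_k^N$ is square-integrable. The plan is the classical route: bound the quadratic variation of $\overline{M}_k^N$ on $[0,T]$, apply Doob's $L^2$ maximal inequality, and finish with Chebyshev.

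First I would write $\overline{M}_k^N$ through the Dynkin/martingale-problem decomposition associated with the generator of the Markov chain $\bigl(\mathbf X^N(t),A^N(t)\bigr)$ applied to $\overline{Y}_k^N=\tfrac1N\mathbf 1^\top\mathbf X_k^N$; this reproduces \eqref{eq:pathwise} and exhibits $\overline{M}_k^N$ as a sum of compensated counting processes indexed by the ordered pairs $(m,\ell)\in\mathcal X^2$, each counting process having instantaneous intensity at most $\gamma_{\max}\,d^N N$, where $\gamma_{\max}:=\max_{m\ell}\gamma_{m\ell}$ (there are $d^N N$ clocks in all, each ringing at rate $\le\gamma_{\max}$). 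Letting $\widehat M^N$ be the number of interactions in $[0,T]$, the same stochastic domination used in the proof of Theorem~\ref{th:sup} gives $\widehat M^N\leq_{a.s.} M^N$ for a Poisson random variable $M^N$ with mean $O(d^N N)$. Hence the optional quadratic variation satisfies
\begin{equation}
[\overline{M}_k^N]_T=\sum_{s\le T}\bigl(\Delta\overline{M}_k^N(s)\bigr)^2\le \Bigl(\tfrac{2}{N}\Bigr)^2\widehat M^N,\qquad \mathbb{E}\bigl[[\overline{M}_k^N]_T\bigr]=O\!\left(\frac{d^N}{N}\right).\nonumber
\end{equation}
Because the network is sparse — the number of edges is $O(N)$, so the regular degree stays bounded, $\sup_N d^N<\infty$ and $d^N\to d$ — the right-hand side is $O(1/N)$.

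Then, since $\mathbb E\bigl[(\overline{M}_k^N(T))^2\bigr]=\mathbb E\bigl[\langle\overline{M}_k^N\rangle_T\bigr]=\mathbb E\bigl[[\overline{M}_k^N]_T\bigr]$ for this square-integrable purely discontinuous martingale, Doob's maximal inequality gives $\mathbb E\bigl[\sup_{t\in[0,T]}|\overline{M}_k^N(t)|^2\bigr]\le 4\,\mathbb E\bigl[[\overline{M}_k^N]_T\bigr]=O(1/N)$, and Chebyshev's inequality yields
\begin{equation}
\mathbb{P}\Bigl(\sup_{t\in[0,T]}\bigl|\overline{M}_k^N(t)\bigr|>\epsilon\Bigr)\le \frac{1}{\epsilon^2}\,\mathbb E\Bigl[\sup_{t\in[0,T]}\bigl|\overline{M}_k^N(t)\bigr|^2\Bigr]=O\!\left(\frac{1}{N\epsilon^2}\right)\overset{N\rightarrow\infty}{\longrightarrow}0,\nonumber
\end{equation}
for every $k\in\mathcal X$ and $T\ge0$, which is the assertion.

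\textbf{Main obstacle.} The substantive (if routine) work is the first step: writing down the generator of the microscopic chain cleanly, verifying that the rewiring transitions contribute nothing to the $\overline{Y}_k^N$-martingale, and identifying the jump intensities so that the Poisson domination of $\widehat M^N$ already established for Theorem~\ref{th:sup} applies verbatim. Once the jumps are seen to be $O(1/N)$ and the expected number of jumps $O(d^N N)=O(N)$, the quadratic-variation estimate and the passage to the uniform norm are immediate. One should also record explicitly the standing sparsity hypothesis $\sup_N d^N<\infty$ (or at least $d^N=o(N)$), which is exactly what forces $\mathbb E\bigl[[\overline{M}_k^N]_T\bigr]\to0$; without sub-linear degree growth the martingale term would not vanish.
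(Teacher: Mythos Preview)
Your proposal is correct and follows the same overall strategy as the paper: bound the $L^2$ norm of $\overline{M}_k^N(T)$ via its quadratic variation, then invoke Doob's maximal inequality. Where you diverge is in how the quadratic-variation bound is obtained. The paper writes $M_k^N$ explicitly as a sum of stochastic integrals against independent compensated Poisson processes (equation~\eqref{eq:martrepre}), uses the orthogonality Theorem~\ref{th:ortho} to kill the cross terms, and then applies the It\^{o} isometry to each summand to obtain ${\sf E}\bigl(M_k^N(T)\bigr)^2\le 4K^2\gamma d^N N T$. You instead bound the optional quadratic variation directly by $(\text{max jump size})^2\times(\text{number of jumps})\le (2/N)^2\,\widehat M^N$ and dominate $\widehat M^N$ by a Poisson of mean $O(d^N N)$; this yields the same $O(d^N/N)$ bound. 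Your route is a bit more elementary---it bypasses the explicit Poisson representation and the orthogonality result---while the paper's route is more structural and makes the martingale decomposition fully explicit, which may be useful elsewhere. Both arrive at the identical endpoint, and both rely on the sparsity assumption $d^N\to d<\infty$ (equivalently $d^N=o(N)$) that you correctly flag as essential.
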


\begin{proof}
We prove that for each~$T\geq 0$, we have
\begin{equation}
{\sf E}\left(\overline{M}_k^N(T)\right)^2 \overset{N\rightarrow \infty}\longrightarrow 0,
\end{equation}
that is, the martingale vanishes in~$\mathcal{L}^2$ on the line. The theorem will follow as corollary to Doob's inequality, i.e.,
\begin{equation}
P\left(\sup_{0\leq t\leq T}\left|\overline{M}_k^{N}(t)\right|>\epsilon\right)\leq \frac{{\sf E}\left(\overline{M}_k^N(T)\right)^2}{\epsilon^2}\overset{N\rightarrow \infty}\longrightarrow 0,\,\,\,\forall\epsilon>0,\,\,\,\forall T\geq0.\label{eq:Doobsinequality}
\end{equation}
For each~$k\in\mathcal{X}$, the martingale is given by
\begin{equation}
M^N_k(t)=\sum_{m\ell\in\mathcal{X}^2}\sum_{n=0}^{d^N N}\int_{0}^t c_{m\ell}(k) \mathbf{1}_{\left\{X_m^{N\top}(s_{-})A^N X_{\ell}^{N}(s_{-})=n \right\}}\left(\mathcal{N}^{(m,\ell,n)}_{\gamma_{m\ell}n}(ds)-\gamma_{m\ell} n \,ds\right)\label{eq:martrepre}
\end{equation}
where~$\left\{\mathcal{N}^{(m,\ell,n)}_{\gamma_{m\ell}n}\right\}_{(m,\ell,n)}$ is a family of pair-wise independent Poisson processes indexed by the triple~$(m,\ell,n)$ and each with mean or parameter~$\gamma_{m\ell}n$.
We have
\begin{eqnarray}
{\sf E}\left(M_{k}^{N}(T)\right)^2&=& {\sf E}\left(\sum_{m\ell\in\mathcal{X}^2}\sum_{n}\int_{0}^T c_{m\ell}(k) \mathbf{1}_{\left\{X_m^{N\top}(s_{-})A^N X_{\ell}^{N}(s_{-})=n \right\}}\left(\mathcal{N}^{(m,\ell,n)}_{\gamma_{m\ell}n}(ds)-\gamma_{m\ell} n\, ds\right)\right)^2\\
&=&\sum_{m\ell\in\mathcal{X}^2}\sum_{n}{\sf E}\left(\int_{0}^T c_{m\ell}(k) \mathbf{1}_{\left\{X_m^{N\top}(s_{-})A^N X_{\ell}^{N}(s_{-})=n \right\}}\left(\mathcal{N}^{(m,\ell,n)}_{\gamma_{m\ell}n}(ds)-\gamma_{m\ell} n\, ds\right)\right)^2\label{eq:mart2}\\
&=&\sum_{m\ell\in\mathcal{X}^2}\sum_{n}{\sf E}\left(\int_{0}^T c^2_{m\ell}(k) \mathbf{1}_{\left\{X_m^{N\top}(s_{-})A^N X_{\ell}^{N}(s_{-})=n \right\}} \gamma_{m\ell} n ds\right)\label{eq:mart3}\\
&\leq& \sum_{m\ell\in\mathcal{X}^2} {\sf E}\left(\int_{0}^{T}\sum_{n}  \mathbf{1}_{\left\{X_m^{N\top}(s_{-})A^N X_{\ell}^{N}(s_{-})=n \right\}} 4 \gamma d^N N ds\right)\label{eq:mart4}\\
&\leq& 4 K^2 \gamma d^N N T,\label{eq:mart5}
\end{eqnarray}
where~$\gamma:=\max_{m\ell} \gamma_{m\ell}$; the second equality~\eqref{eq:mart2} follows from Theorem~\ref{th:ortho} and the independence of all the underlying Poisson processes involved (hence, the cross terms in the square are zero-mean martingales). The third equality~\eqref{eq:mart3} is due to the It\^{o} isometry Theorem (refer to~\cite{Diffusion2} or~\cite{Karatzas}) and the fact that the quadratic variation of a compensated Poisson martingale is given by
\begin{equation}
\left\langle\mathcal{N}_{\gamma}(t)-\gamma t \right\rangle=\gamma t.\nonumber
\end{equation}
The first inequality~\eqref{eq:mart4} is due to
\begin{equation}
c^2_{m\ell}(k)\leq 4;\,\,\, n\leq  d^N N = 2 \times \# \mbox{ of edges}.\nonumber
\end{equation}
The last inequality~\eqref{eq:mart5} holds since the family of subsets of the interval~$\left[0,T\right]$
\begin{equation}
I_n(\omega):=\left\{s\in\left[0,T\right]\,:\, X_m^{N\top}(\omega,s_{-})A^N X_{\ell}^{N}(\omega,s_{-})=n\right\},\nonumber
\end{equation}
for each fixed pair~$\left(m,\ell\right)$, indexed by~$n$, are realization-wise disjoint and thus for each pair~$\left(m,\ell\right)$
\begin{equation}
\sum_n \mathbf{1}_{\left\{X_m^{N\top}(\omega,s_{-})A^N X_{\ell}^{N}(\omega,s_{-})=n\right\}}=
{\bf 1}_{\bigcup_{n}{\left\{X_m^{N\top}(\omega,s_{-})A^N X_{\ell}^{N}(\omega,s_{-})=n\right\}}}\leq {\bf 1}_{\left[0,T\right]}(\omega,s_{-}).\nonumber
\end{equation}
for all $\omega\in\Omega$.

Therefore, for the normalized martingale, we have for all fixed~$T$
\begin{equation}
{\sf E}\left(\overline{M}_{k}^{N}(T)\right)^2=\frac{1}{N^2}{\sf E}\left(M_{k}^{N}(T)\right)^2\leq \frac{4K^2 \gamma d^N T}{N}\longrightarrow 0.\nonumber
\end{equation}
and the result now follows from Doob's inequality~\eqref{eq:Doobsinequality}.
\end{proof}

\section{Weak Convergence of the Macroprocess}\label{sec:frameswr}

%


The stochastic dynamical system for the macroscopics~\eqref{eq:pathwise} can be rewritten as follows
\begin{eqnarray}
\overline{Y}_k^N(t) & = & \overline{Y}_{k}^N(0)+\overline{M}_k^N(t) + d^N\int_{0}^t \sum_{m\ell\in \mathcal{X}^2} \gamma_{m\ell} c_{m\ell}(k) \overline{Y}_m^N(s_-)\overline{Y}_{\ell}^N(s_-) ds\nonumber\\
& &\, + \int_{0}^t \sum_{m\ell\in \mathcal{X}^2} \gamma_{m\ell} c_{m\ell}(k) \left(\mathbf{X}_{m}^{N\, \top}(s_-) A^{N}\mathbf{X}_{\ell}^{N}(s_-)- d^N \overline{Y}_m^N(s_-)\overline{Y}_{\ell}^N(s_-)\right)ds.\nonumber
\end{eqnarray}
for each~$k\in\mathcal{X}$. The next theorem follows from the equicontinuity condition in the Arzel\`{a}-Ascoli Theorem (refer to Theorem~\ref{th:arzela} in the Appendix).

\begin{theorem}\label{th:finaltight}
The sequence of macro-processes~$\left(\mathbf{\overline{Y}}^N(t)\right)=\left(\overline{Y}_1^N(t),\ldots,\overline{Y}_K^N(t)\right)$ is $C$-tight, i.e., its set of weak-accumulation points is nonempty and lie almost surely in~$C_{\left[0,T\right]}$, that is,
\begin{equation}
\left\{\left(\mathbf{\overline{Y}}^{N_k}(t)\right)\Rightarrow \left(\mathbf{\overline{Y}}(t)\right)\right\}\Rightarrow \mathbb{P}\left(\left(\mathbf{\overline{Y}}(t)\right)\in C_{\left[0,T\right]}\right)=1.\nonumber
\end{equation}
\end{theorem}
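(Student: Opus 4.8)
The plan is to verify the two ingredients in the Arzel\`{a}--Ascoli characterization of $C$-tightness (Theorem~\ref{th:arzela}): compact containment of the time marginals, and a uniform-in-$N$ modulus-of-continuity estimate that simultaneously forces the jumps to vanish. I would work from the decomposition displayed just above,
\[
\overline{Y}_k^N(t)=\overline{Y}_k^N(0)+\overline{M}_k^N(t)+\int_0^t f_k^N\!\left(\mathbf{\overline{Y}}^N(s_-)\right)ds+\int_0^t g_k^N(s_-)\,ds,
\]
where $f_k^N(\mathbf{y})=d^N\sum_{m\ell}\gamma_{m\ell}c_{m\ell}(k)\,y_m y_\ell$ is the prelimit vector field and $g_k^N(s)=\sum_{m\ell}\gamma_{m\ell}c_{m\ell}(k)\,R^N_{m\ell}(s)$ is a fixed linear combination of the gap processes. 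Since the rows of $\mathbf{X}^N(t)$ sum to one, $\mathbf{\overline{Y}}^N(t)\in[0,1]^K$ for every $t$ and every $\omega$; this compact set contains all the marginals, which settles compact containment and makes them automatically tight in $\mathbb{R}^K$.

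For the modulus estimate, fix $0\le s\le t\le T$ with $t-s\le\delta$ and bound
\[
\bigl|\overline{Y}_k^N(t)-\overline{Y}_k^N(s)\bigr|\le 2\sup_{[0,T]}\bigl|\overline{M}_k^N\bigr|+\int_s^t\bigl|f_k^N(\mathbf{\overline{Y}}^N(u_-))\bigr|\,du+\int_s^t\bigl|g_k^N(u_-)\bigr|\,du.
\]
Because $d^N\to d<\infty$, $|c_{m\ell}(k)|\le 2$, and $y_m y_\ell\le 1$ on $[0,1]^K$, there is a constant $C_1$ independent of $N$ with $|f_k^N|\le C_1$, so the middle term is at most $C_1\delta$ deterministically. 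Writing $G^N:=\sup_{u\in[0,T]}\max_{m,\ell}|R^N_{m\ell}(u)|$, the last term is at most $2K^2\gamma\,G^N\,(t-s)\le 2K^2\gamma T\,G^N$, and $G^N\overset{\mathbb{P}}\longrightarrow 0$ by Theorem~\ref{th:sup}; the martingale term satisfies $\max_k\sup_{[0,T]}|\overline{M}_k^N|\overset{\mathbb{P}}\longrightarrow 0$ by Theorem~\ref{lm:martconv}. Hence, given $\eta>0$, choosing $\delta$ so that $C_1\delta<\eta/3$,
\[
\mathbb{P}\!\left(\sup_{|t-s|\le\delta}\bigl\|\mathbf{\overline{Y}}^N(t)-\mathbf{\overline{Y}}^N(s)\bigr\|>\eta\right)\le \mathbb{P}\!\left(2\max_k\sup_{[0,T]}|\overline{M}_k^N|>\tfrac{\eta}{3}\right)+\mathbb{P}\!\left(2K^2\gamma T\,G^N>\tfrac{\eta}{3}\right)\overset{N\rightarrow \infty}\longrightarrow 0,
\]
so $\lim_{\delta\to 0}\limsup_N\mathbb{P}(w(\mathbf{\overline{Y}}^N,\delta)>\eta)=0$. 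Together with compact containment this gives tightness in $D_{[0,T]}$, and in particular a nonempty set of weak-accumulation points.

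To upgrade tightness to $C$-tightness, I would note that a shuffle leaves $\mathbf{\overline{Y}}^N$ unchanged (since $\mathbf{1}^\top P=\mathbf{1}^\top$) and each peer-to-peer interaction alters at most two agents' states, so $\sup_{t\in[0,T]}\|\Delta\mathbf{\overline{Y}}^N(t)\|\le 4/N\to 0$; this vanishing-jumps condition, combined with the modulus estimate just obtained, is exactly what promotes tightness in $D_{[0,T]}$ to $C$-tightness, i.e.\ forces every weak limit to be concentrated on $C_{[0,T]}$. The only step requiring the earlier machinery is the term $\int_s^t g_k^N\,du$: a merely on-the-line concentration of the gap process would not control its time integral over all subintervals at once, so the sup-norm statement of Theorem~\ref{th:sup} — rather than the pointwise estimate of Theorem~\ref{th:explineconv} — is the indispensable input here, and it is the step I would expect to be the crux were it not already established.
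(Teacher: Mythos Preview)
Your proof is correct and follows essentially the same route as the paper: verify the two Arzel\`a--Ascoli conditions (Theorem~\ref{th:arzela}) using the decomposition into martingale, drift, and gap-integral terms, and invoke Theorem~\ref{th:sup} and Theorem~\ref{lm:martconv} to kill the first and last. Your additional discussion of the jump sizes $\le 4/N$ is correct but redundant here, since the version of Arzel\`a--Ascoli quoted in the paper uses the \emph{standard} modulus $\omega$ (not the Skorokhod modulus $\omega'$), and satisfying the equicontinuity condition for $\omega$ already forces every weak limit to be concentrated on $C_{[0,T]}$.
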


\begin{proof}
We show that $\left(\mathbf{\overline{Y}}^{N}(t)\right)$ fulfills the bound and equicontinuity conditions in equations~\eqref{eq:arzela}-\eqref{eq:arzela2} in the Arzel\`{a}-Ascoli Theorem, Theorem~\ref{th:arzela}.
Indeed, we have
\begin{equation}
\mathbb{P}\left(\sup_{0\leq t\leq T}\overline{Y}_m^{N}(t)\geq k\right)=0,\, \forall k>1,\nonumber
\end{equation}
and the first condition holds since~$0 \leq \overline{Y}_m^{N}(t)\leq 1$ almost surely for all~$t\in\left[0,T\right]$ and~$m\in\mathcal{X}$ (as referred in the end of Subsection~\ref{subsec:peertopeer}).

For the equicontinuity condition, for every~$k\in\mathcal{X}$, we have
\begin{eqnarray}
\label{eq:variation}
\omega\left(\overline{Y}_k^{N}, \delta, T\right)&=&\sup_{\left|u-v\right|\leq \delta,\,u,v\in\left[0,T\right]}\left\{\left|\overline{Y}_k^{N}(u)-\overline{Y}_k^{N}(v)\right|\right\}\label{eq:variation1}\\
&=& \sup_{\left|u-v\right|\leq \delta,\,u,v\in\left[0,T\right]}\left\{\left|\overline{M}_k^{N}(u)-\overline{M}_k^{N}(v)\right.\right.\label{eq:variation2}\\
&&\left.\left.+d^N\int_{u}^{v} \sum_{m\ell\in \mathcal{X}^2} \gamma_{m\ell} c_{m\ell}(k) \overline{Y}_m^{N}(s_-)\overline{Y}_{\ell}^{N}(s_-) ds\right.\right.\nonumber\\
& & +\left.\left.\int_{u}^v \sum_{m\ell\in \mathcal{X}^2} \gamma_{m\ell} c_{m\ell}(k) \left(\frac{\mathbf{X}_m^{N\,\top}(s_-)A^N\mathbf{X}_{\ell}^N(s_-)}{N}-d^N \overline{Y}_m^{N}(s_-)\overline{Y}_{\ell}^{N}(s_-)\right) ds\right|\right\}\nonumber\\
&\leq & \sup_{0\leq t\leq T}\left|\overline{M}_k^{N}(t)\right|+\gamma^{(k)} d^N \delta+\gamma^{(k)} \delta  \sup_{0\leq t\leq T} \left|R_{m\ell}^N(t)\right|\\
&=:&\omega_2\left(\overline{Y}_k^{N},\delta,T\right),\label{eq:variation5}
\end{eqnarray}
where we defined~$\gamma^{(k)}:=\sum_{m\ell\in \mathcal{X}^2} \gamma_{m\ell} \left|c_{m\ell}(k)\right|$. Now, for any $\widehat{\epsilon}>0$, we have
\begin{equation}
\mathbb{P}\left(\omega\left(\overline{Y}_i^{N},\delta,T\right)\geq \widehat{\epsilon}\right)\leq \mathbb{P}\left(\omega_2\left(\overline{Y}_i^{N},\delta,T\right)\geq\widehat{\epsilon}\right).\nonumber
\end{equation}
Moreover,
\begin{eqnarray}
\mathbb{P}\left(\omega_2\left(\overline{Y}_k^{N},\delta,T\right)\geq\widehat{\epsilon}\right) & \leq &
\mathbb{P}\left(\sup_{0\leq t\leq T}\left|\overline{M}_k^{N}(t)\right|>\frac{\widehat{\epsilon}}{3}\right)+\mathbb{P}\left(\gamma^{(k)} d^N \delta >\frac{\widehat{\epsilon}}{3}\right)\label{eq:bothsides1}\\
& & +\mathbb{P}\left(\gamma^{(k)} \delta \sup_{0\leq t\leq T} \left|R_{m\ell}^N(t)\right| >\frac{\widehat{\epsilon}}{3}\right).\label{eq:bothsides2}
\end{eqnarray}
By applying the $\limsup_{N}$ on both sides of the inequality~\eqref{eq:bothsides1}-\eqref{eq:bothsides2}, we obtain
\begin{equation}
\limsup_{N\rightarrow \infty}\mathbb{P}\left(\omega_2\left(\overline{Y}_k^{N},\delta,T\right)\geq\epsilon\right)\leq
\mathbb{P}\left(\gamma d \delta >\frac{\widehat{\epsilon}}{3}\right),\label{eq:bothsides3}
\end{equation}
from Theorem~\ref{th:sup}, the martingale convergence Theorem~\ref{lm:martconv}, and the assumption $d^N\overset{N\rightarrow \infty}\longrightarrow d$. Therefore, we can apply $\lim_{\delta\rightarrow 0}$ to equation~\eqref{eq:bothsides3}
\begin{equation}
\lim_{\delta\rightarrow 0}\limsup_{N\rightarrow \infty}\mathbb{P}\left(\omega_2\left(\overline{Y}^{N},\delta,T\right)\geq\epsilon\right)\leq
\lim_{\delta\rightarrow 0}\mathbb{P}\left(\gamma d \delta>\frac{\widehat{\epsilon}}{3}\right)=0,\nonumber
\end{equation}
and thus,
\begin{equation}
\lim_{\delta\rightarrow 0}\limsup_{N\rightarrow \infty}\mathbb{P}\left(\omega\left(\overline{Y}_k^{N},\delta,T\right)\geq\epsilon\right)\leq \lim_{\delta\rightarrow 0}\limsup_{N\rightarrow \infty}\mathbb{P}\left(\omega_2\left(\overline{Y}_k^{N},\delta,T\right)\geq\epsilon\right)=0.\nonumber
\end{equation}
\end{proof}

We conclude that $\left(\mathbf{\overline{Y}}^{N}(t)\right)$ is a tight family with almost surely continuous weak-accumulation points,
\begin{equation}
\left(\mathbf{\overline{Y}}^{N_n}(t)\right)\overset{n\rightarrow \infty}\Rightarrow \left(\mathbf{\overline{Y}}(t)\right)\nonumber
\end{equation}
with
\begin{equation}
\mathbb{P}\left(\left(\mathbf{\overline{Y}}(t)\right)\in C_{\left[0,T\right]}\right)=1.\nonumber
\end{equation}

\begin{theorem}\label{eq:finalconvergence}
Let~$\mathbf{\overline{Y}}^{N}(0)\Rightarrow \mathbf{\overline{Y}}(0)$. Any weak accumulation process~$\left(\mathbf{\overline{Y}}(t)\right)$ of $\left(\mathbf{\overline{Y}}^N(t)\right)$ obeys the integral equation
\begin{equation}
\overline{Y}_k(\omega,t)=\overline{Y}_k(\omega,0)+d \sum_{m\ell\in \mathcal{X}^2} \int_{0}^{t} \gamma_{m\ell} c_{m\ell}(k) \overline{Y}_m(\omega,s)\overline{Y}_{\ell}(\omega,s)ds,\label{eq:integr}
\end{equation}
for~$k\in\mathcal{X}$ and almost all~$\omega\in \Omega$.
\end{theorem}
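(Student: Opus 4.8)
The plan is to pass to the limit in the pathwise martingale representation~\eqref{eq:pathwise} along an almost surely convergent coupling, using the three ingredients already established: $C$-tightness of $\left(\mathbf{\overline{Y}}^N(t)\right)$ (Theorem~\ref{th:finaltight}), vanishing of the martingale part, $\sup_{[0,T]}\left|\overline{M}_k^N(t)\right|\overset{\mathbb{P}}\longrightarrow 0$ (Theorem~\ref{lm:martconv}), and vanishing of the gap process, $\sup_{[0,T]}\left|R^N_{m\ell}(t)\right|\overset{\mathbb{P}}\longrightarrow 0$ (Theorem~\ref{th:sup}). Fix a weakly convergent subsequence $\left(\mathbf{\overline{Y}}^{N_n}(t)\right)\Rightarrow\left(\mathbf{\overline{Y}}(t)\right)$; by Theorem~\ref{th:finaltight} the accumulation process takes values in $C_{\left[0,T\right]}$ almost surely, which is what makes the passage to the limit inside the quadratic time-integral possible.

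First I would introduce, for each $N$, the scalar residual
\begin{equation}
\theta^N:=\sup_{t\in\left[0,T\right]}\,\max_{k\in\mathcal{X}}\left|\overline{Y}_k^N(t)-\overline{Y}_k^N(0)-d^N\int_0^t\sum_{m\ell\in\mathcal{X}^2}\gamma_{m\ell}c_{m\ell}(k)\overline{Y}_m^N(s_-)\overline{Y}_\ell^N(s_-)\,ds\right|,\nonumber
\end{equation}
which is an explicit measurable functional $\theta^N=g_N\!\left(\mathbf{\overline{Y}}^N\right)$ of the single path $\left(\mathbf{\overline{Y}}^N(t)\right)$, with $g_N$ continuous at continuous paths. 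From~\eqref{eq:pathwise} one has, almost surely, the deterministic bound $\theta^N\le\max_k\sup_{[0,T]}\left|\overline{M}_k^N(t)\right|+T\bigl(\max_k\gamma^{(k)}\bigr)\max_{m\ell}\sup_{[0,T]}\left|R^N_{m\ell}(t)\right|$, where $\gamma^{(k)}=\sum_{m\ell\in\mathcal{X}^2}\gamma_{m\ell}\left|c_{m\ell}(k)\right|$ as in the proof of Theorem~\ref{th:finaltight}. Hence Theorems~\ref{lm:martconv} and~\ref{th:sup} give $\theta^N\overset{\mathbb{P}}\longrightarrow 0$, and since a sequence converging in probability to a constant couples weakly with anything, $\left(\mathbf{\overline{Y}}^{N_n}(t),\theta^{N_n}\right)\Rightarrow\left(\mathbf{\overline{Y}}(t),0\right)$ in $D_{\left[0,T\right]}\times\mathbb{R}$.

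Next I would invoke the Skorokhod Representation Theorem~\cite{skorokhod,billi}: on a common probability space there are copies $\left(\mathbf{\widetilde{Y}}^{N_n}(t),\widetilde{\theta}^{N_n}\right)\overset{d}=\left(\mathbf{\overline{Y}}^{N_n}(t),\theta^{N_n}\right)$ converging almost surely to $\left(\mathbf{\widetilde{Y}}(t),0\right)$, and — because the limiting path is continuous — the convergence $\mathbf{\widetilde{Y}}^{N_n}\to\mathbf{\widetilde{Y}}$ holds \emph{uniformly} on $\left[0,T\right]$ almost surely. The identity $\theta^{N_n}=g_{N_n}\!\left(\mathbf{\overline{Y}}^{N_n}\right)$ holds with probability one, hence transfers to the copies: $\widetilde{\theta}^{N_n}=g_{N_n}\!\left(\mathbf{\widetilde{Y}}^{N_n}\right)$ almost surely. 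Letting $n\to\infty$ and using uniform convergence together with $0\le\widetilde{Y}_k^{N_n}(t)\le 1$ (so that $\int_0^t\widetilde{Y}_m^{N_n}\widetilde{Y}_\ell^{N_n}\,ds\to\int_0^t\widetilde{Y}_m\widetilde{Y}_\ell\,ds$ uniformly in $t$) and the hypothesis $d^N\to d$, the right side converges to $g_\infty\!\left(\mathbf{\widetilde{Y}}\right)$ while the left side converges to $0$; therefore $g_\infty\!\left(\mathbf{\widetilde{Y}}\right)=0$ almost surely, i.e.
\begin{equation}
\widetilde{Y}_k(t)=\widetilde{Y}_k(0)+d\sum_{m\ell\in\mathcal{X}^2}\int_0^t\gamma_{m\ell}c_{m\ell}(k)\widetilde{Y}_m(s)\widetilde{Y}_\ell(s)\,ds,\qquad t\in\left[0,T\right],\ k\in\mathcal{X},\nonumber
\end{equation}
almost surely. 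Since $\left(\mathbf{\widetilde{Y}}(t)\right)\overset{d}=\left(\mathbf{\overline{Y}}(t)\right)$ and ``satisfying this integral equation for all $t\in\left[0,T\right]$'' is a measurable event in $C_{\left[0,T\right]}$, the same holds for $\left(\mathbf{\overline{Y}}(t)\right)$, which is~\eqref{eq:integr}; the assumed convergence $\mathbf{\overline{Y}}^N(0)\Rightarrow\mathbf{\overline{Y}}(0)$ pins down the initial condition. (Combined with Picard–Lindel\"of uniqueness for the polynomial vector field $f_k$ on the invariant hyper-cube, this also upgrades subsequential to full weak convergence.)

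The main obstacle I expect is not any single hard estimate — those are already done — but the bookkeeping around the coupling: one must (i) carry the residual $\theta^N$ as an auxiliary coordinate so that the Skorokhod representation applies to it \emph{simultaneously} with the path and its probability-one defining identity transfers cleanly, and (ii) upgrade Skorokhod convergence of $\left(\mathbf{\overline{Y}}^{N_n}(t)\right)$ to uniform convergence, which is exactly where $C$-tightness (continuity of the limit) is indispensable and is what licenses the limit inside the quadratic integral $\int_0^t\overline{Y}_m\overline{Y}_\ell\,ds$ — plain Skorokhod convergence would not suffice there. Everything else reduces to transferring measure-one events across the coupling and ordinary uniform/dominated convergence.
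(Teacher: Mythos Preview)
Your proposal is correct and follows essentially the same route as the paper: Skorokhod representation along a weakly convergent subsequence, upgrade to uniform convergence via $C$-tightness of the limit, then pass the limit inside the quadratic time-integral using boundedness and $d^N\to d$. The only cosmetic difference is packaging---the paper carries the gap process $\left(\mathbf{R}^{N}(t)\right)$ as a separate coordinate in the Skorokhod coupling and identifies the residual functional with the martingale $\overline{M}_k^N$, whereas you absorb both the martingale and the gap into the single scalar $\theta^N=g_N\!\left(\mathbf{\overline{Y}}^N\right)$; your explicit use of the Slutsky-type joint-convergence argument and the transfer of the probability-one identity $\theta^{N}=g_N\!\left(\mathbf{\overline{Y}}^{N}\right)$ is if anything a bit more careful than the paper's treatment.
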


\begin{proof}
Define the functional
\begin{equation}
\mathcal{F}_k\,:\,D^{K\times K}_{\left[0,T\right]}\times D^K_{\left[0,T\right]}\longrightarrow \mathbb{R}\nonumber
\end{equation}
with
\begin{eqnarray}
\mathcal{F}_k\left(\left(\mathbf{r}(t),\mathbf{y}(t)\right)\right) & := & y_k(t)-y_k(0)+d \sum_{m\ell\in \mathcal{X}^2} \int_{0}^{t} \gamma_{m\ell} c_{m\ell}(k) y_m(s) y_{\ell}(s)ds\\
& & +\sum_{m\ell\in \mathcal{X}^2} \int_{0}^{t} \gamma_{m\ell} c_{m\ell}(k) r_{m\ell}(s)ds.\nonumber
\end{eqnarray}
where~$D^K_{\left[0,T\right]}$ stands for the space of c\`{a}dl\`{a}g sample paths from the interval~$\left[0,T\right]$ to the cube~$\left[0,1\right]^K$ endowed with the Skorokhod metric (it is a Polish space, refer to~\cite{billi}). The functional~$\mathcal{F}_k$ is measurable: indeed, the sum `$+$' operator is measurable (with respect to the product topology $D_{\left[0,T\right]}\times D_{\left[0,T\right]}$); the integral operator `$\left(\int_{0}^t (\cdot)ds\right)$' is measurable; and composition of measurable operators is measurable (for these observations, refer to~\cite{bogachev}).

Let~$\left(\mathbf{\overline{Y}}^{N_n}(t)\right)\Rightarrow \left(\mathbf{\overline{Y}}(t)\right)$ and remark from Theorem~\ref{th:sup} that~$\left(\mathbf{\overline{R}}^{N_n}(t)\right)\Rightarrow \mathbf{0}$. We now prove that
\begin{equation}
\mathcal{F}_k\left(\mathbf{\overline{R}}^{N_n}(t),\mathbf{\overline{Y}}^{N_n}(t)\right)\Rightarrow \mathcal{F}_k\left(\mathbf{0},\mathbf{\overline{Y}}(t)\right).\nonumber
\end{equation}
From the Skorokhod's Representation Theorem~\cite{skorokhod,billi},
\begin{equation}
\begin{array}{cc}
\exists \left(\mathbf{\widetilde{Y}}^{n}(t)\right),\,\left(\mathbf{\widetilde{R}}^{n}(t)\right),\, \left(\mathbf{\widetilde{Y}}(t)\right)\,: &  \left(\mathbf{\widetilde{Y}}^{n}(t)\right)\overset{d}= \left(\mathbf{\overline{Y}}^{N_n}(t)\right),\, \left(\mathbf{\widetilde{R}}^{n}(t)\right)\overset{d}= \left(\mathbf{\overline{R}}^{N_n}(t)\right)\\
& \left(\mathbf{\widetilde{Y}}(t)\right)\overset{d}=\left(\mathbf{\overline{Y}}(t)\right)\\
& \left(\mathbf{\widetilde{Y}}^{n}(\omega,t)\right)\overset{U\left[0,T\right]}\longrightarrow \left(\mathbf{\widetilde{Y}}(\omega,t)\right),\,\left(\mathbf{\widetilde{R}}^{n}(\omega,t)\right)\overset{U\left[0,T\right]}\longrightarrow \mathbf{0}.\nonumber
\end{array}
\end{equation}
for almost all $\omega\in \Omega$, where~$U\left[0,T\right]$ stands for uniform convergence in the interval~$\left[0,T\right]$. Since,
\begin{equation}
\left(\mathbf{\widetilde{Y}}^{n}(\omega,t)\right)\longrightarrow\left(\mathbf{\widetilde{Y}}(\omega,t)\right)\nonumber
\end{equation}
a.s. uniformly over the compact interval~$\left[0,T\right]$, we can interchange the limit with the integral via the Dominated Convergence Theorem (e.g.,~\cite{williamsprobability}),
\begin{eqnarray}
\int_{0}^{t}\widetilde{Y}_{m}^{n}(\omega,s)\widetilde{Y}_{\ell}^{n}(\omega,s)ds & \longrightarrow & \int_{0}^t \widetilde{Y}_{m}(\omega,s) \widetilde{Y}_{\ell}(\omega,s) ds\nonumber \\
\int_{0}^{t}\widetilde{R}_{m\ell}^{n}(\omega,s) ds & \longrightarrow & 0.\nonumber
\end{eqnarray}
Therefore,
\begin{equation}
\mathcal{F}_k\left(\mathbf{\overline{R}}^{N_n}(t),\mathbf{\overline{Y}}^{N_n}(t)\right)\overset{d}= \mathcal{F}_k\left(\mathbf{\widetilde{R}}^{n}(t),\mathbf{\widetilde{Y}}^{n}(t)\right)\longrightarrow \mathcal{F}_k\left(\mathbf{0},\mathbf{\widetilde{Y}}(t)\right)\overset{d}= \mathcal{F}_k\left(\mathbf{0},\mathbf{\overline{Y}}(t)\right)\label{eq:alterprocess}
\end{equation}
where the first and last equality are due to the measurability of $\mathcal{F}_k$; and the convergence `$\longrightarrow$' is in a realization-wise sense with respect to the uniform topology on the space of sample paths. In particular, this implies convergence in probability, and thus, convergence~\eqref{eq:alterprocess} holds in a weak sense (refer to Corollary~$1.6$ from~\cite{Ethier}), i.e.,
\begin{equation}
\mathcal{F}_k\left(\mathbf{\overline{R}}^{N_n}(t),\mathbf{\overline{Y}}^{N_n}(t)\right)\Rightarrow \mathcal{F}_k\left(0,\mathbf{\overline{Y}}(t)\right).\nonumber
\end{equation}
Remark that $\left(\mathbf{\overline{Y}}^{N}(t)\right)$ obeys the following stochastic dynamics
\begin{equation}
\mathcal{F}_k\left(\mathbf{\overline{R}}^{N}(\omega,t),\mathbf{\overline{Y}}^{N}(\omega,t)\right)=\overline{M}_k^{N}(\omega,t),\nonumber
\end{equation}
and since
\begin{equation}
\left(\mathbf{\overline{M}}^{N}(t)\right)\Rightarrow 0\nonumber
\end{equation}
we have
\begin{equation}
\mathcal{F}_k\left(\mathbf{0},\mathbf{\overline{Y}}(\omega,t)\right)\overset{d}= 0,\nonumber
\end{equation}
or in other words,
\begin{equation}
\overline{Y}_k(\omega,t)=\overline{Y}_k(\omega,0)+d \sum_{m\ell\in \mathcal{X}^2} \int_{0}^{t} \gamma_{m\ell} c_{m\ell}(k) \overline{Y}_m(\omega,s)\overline{Y}_{\ell}(\omega,s)ds,\nonumber
\end{equation}
for almost all $\omega\in\Omega$.
\end{proof}

Now, from the uniqueness of the integral equation (the vector field is Lipschitz) we conclude uniqueness of the accumulation point and the following result follows.

\begin{theorem}
Let~$\mathbf{\overline{Y}}^N(0)\Rightarrow \mathbf{y}(0)$. We have
\begin{equation}
\left(\mathbf{\overline{Y}}^N(t)\right)\Rightarrow \left(\mathbf{y}(t)\right)=\left(y_1(t),\ldots,y_k(t)\right)\nonumber
\end{equation}
where~$\left(\mathbf{y}(t)\right)$ is the solution to the ODE
\begin{equation}
\dot{y}_{k}(t) = d \,\,\mathbf{y}(t)^{\top}\left(\Gamma \odot C(k)\right) \mathbf{y}(t)\mbox{  for }k=\left\{1,2,\ldots,K\right\}\label{eq:mainode}
\end{equation}
with initial condition~$\mathbf{y}(0)$, $\Gamma=\left[\gamma_{m\ell}\right]_{m\ell}$, $C(k)=\left[c_{m\ell}(k)\right]_{m\ell}$ and $\odot$ is the pointwise Hadamard product.
\end{theorem}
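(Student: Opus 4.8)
The plan is to combine the three facts already established — $C$-tightness of the prelimit family (Theorem~\ref{th:finaltight}), the identification of every weak accumulation point as a solution of the integral equation~\eqref{eq:integr} (Theorem~\ref{eq:finalconvergence}), and uniqueness for that integral equation — via the classical principle that a relatively compact sequence all of whose weak accumulation points have the same law is itself weakly convergent to that common law.

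First I would note that, by Theorem~\ref{th:finaltight} together with Prokhorov's theorem, $\left(\mathbf{\overline{Y}}^N(t)\right)$ is relatively compact in $D^K_{[0,T]}$, so every subsequence admits a further subsequence with $\left(\mathbf{\overline{Y}}^{N_n}(t)\right)\Rightarrow\left(\mathbf{\overline{Y}}(t)\right)$ and $\mathbb{P}\bigl(\left(\mathbf{\overline{Y}}(t)\right)\in C_{[0,T]}\bigr)=1$. Evaluation at $t=0$ is continuous on $D^K_{[0,T]}$, so $\mathbf{\overline{Y}}(0)\overset{d}=\mathbf{y}(0)$ (matching the full sequence $\mathbf{\overline{Y}}^N(0)\Rightarrow\mathbf{y}(0)$ by uniqueness of weak limits), and since $\mathbf{\overline{Y}}^N(t)\in[0,1]^K$ almost surely the limit stays in the hypercube almost surely. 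Theorem~\ref{eq:finalconvergence} then yields that, for almost every $\omega$,
\begin{equation}
\overline{Y}_k(\omega,t)=\overline{Y}_k(\omega,0)+d\sum_{m\ell\in\mathcal{X}^2}\int_0^t\gamma_{m\ell}c_{m\ell}(k)\,\overline{Y}_m(\omega,s)\overline{Y}_\ell(\omega,s)\,ds,\qquad k\in\mathcal{X}.\nonumber
\end{equation}

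Second comes uniqueness. The vector field $\mathbf{y}\mapsto f_k(\mathbf{y})=d\,\mathbf{y}^{\top}(\Gamma\odot C(k))\mathbf{y}$ is quadratic, hence Lipschitz on the compact invariant set $[0,1]^K$, so for each initial point in $[0,1]^K$ the displayed integral equation has a unique solution on $[0,T]$ (Picard--Lindel\"{o}f), and the solution flow $\Phi$ of~\eqref{eq:mainode} depends continuously on the initial condition (Gronwall on $[0,1]^K$). Therefore $\left(\mathbf{\overline{Y}}(t)\right)=\Phi_t\bigl(\mathbf{\overline{Y}}(0)\bigr)$ almost surely, so its law is the pushforward of the law $\mathbf{y}(0)$ under $\Phi$, independent of the chosen subsequence; denote this common limit by $\left(\mathbf{y}(t)\right)$ (the solution of~\eqref{eq:mainode} with initial law $\mathbf{y}(0)$, which reduces to the deterministic ODE trajectory when $\mathbf{y}(0)$ is deterministic, as in~\eqref{eq:odelower}). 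Since all weak accumulation points of the relatively compact sequence $\left(\mathbf{\overline{Y}}^N(t)\right)$ coincide with $\left(\mathbf{y}(t)\right)$ in distribution, we conclude $\left(\mathbf{\overline{Y}}^N(t)\right)\Rightarrow\left(\mathbf{y}(t)\right)$ in $D^K_{[0,T]}$; and because the limit is supported on $C_{[0,T]}$, this is equivalently convergence in the uniform topology on sample paths (Billingsley~\cite{billi}). Given the groundwork laid in the previous sections, there is no serious obstacle here — the only point demanding a little care is the bookkeeping at $t=0$: verifying that every accumulation point starts with law $\mathbf{y}(0)$ and that $\mathbf{a}\mapsto\Phi_\cdot(\mathbf{a})$ is continuous, so that the pushforward defining the limiting law is unambiguous.
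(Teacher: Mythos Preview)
Your proposal is correct and follows essentially the same route as the paper's own proof: relative compactness (Prokhorov via $C$-tightness), identification of every accumulation point as a solution of the integral equation, and uniqueness of that solution from the Lipschitz vector field to conclude convergence of the full sequence. You have simply fleshed out details the paper leaves implicit --- continuity of evaluation at $t=0$, Picard--Lindel\"of on the invariant cube, and continuous dependence on the initial condition (the latter only needed if $\mathbf{y}(0)$ is genuinely random; in the paper it is treated as a deterministic point).
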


\begin{proof}
Since the vector field is Lipschitz, the continuous (and thus, differentiable) solution $\left(\mathbf{\overline{Y}}(t)\right)$ of (\ref{eq:integr}) is unique. Thus, any weak limit of $\left(\mathbf{\overline{Y}}^N(t)\right)$ with initial condition given by $\mathbf{\overline{Y}}^N(0)$ and converging in distribution to $\mathbf{\overline{Y}}(0)$ is equal to the unique solution $\left(\mathbf{\overline{Y}}(t)\right)$ of~(\ref{eq:integr}) with initial condition $\left(\mathbf{\overline{Y}}(0)\right)$. Therefore, by Prokhorov's Theorem~\cite{billi,bogachev}, the whole sequence converges
\begin{equation}
\left(\mathbf{\overline{Y}}^N(t)\right)\Rightarrow\left(\mathbf{\overline{Y}}(t)\right)\nonumber
\end{equation}
to the solution of~(\ref{eq:integr}). Equation~(\ref{eq:integr}) is the integral version of the ODE~\eqref{eq:mainode}.
\end{proof}

Fig.~\ref{fig:concentrationillu} depicts a numerical simulation illustrating the concentration result proved for the case of a binary state~$\left\{0,1\right\}$ contact process, where~$A^N$ was assumed to be a cycle network (i.e.,~$d^N=2$) for all~$N=100,1000,4000$. We observe that as the number of nodes increase, the stochastic dynamics (captured by the blue noisy curves) concentrates about the solution to the limiting ODE (captured by the red smooth curves).
\begin{figure}[ht!]
\begin{center}
\begin{subfigure}{0.31\textwidth}
\centering
\includegraphics[scale=0.4]{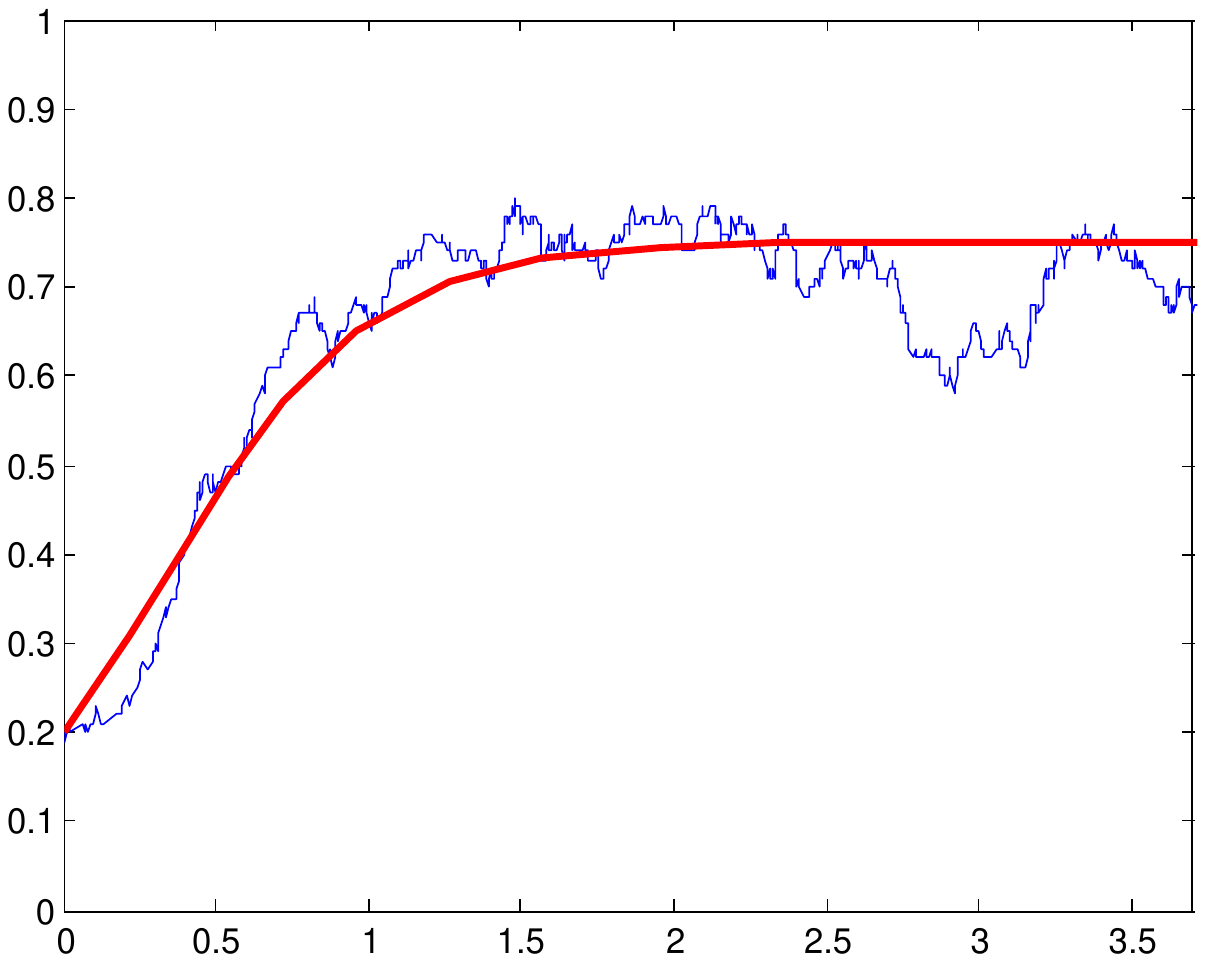}
\caption{$100$ nodes.} \label{fig:hittings}
\end{subfigure}
\begin{subfigure}{0.31\textwidth}
\centering
\includegraphics[scale=0.4]{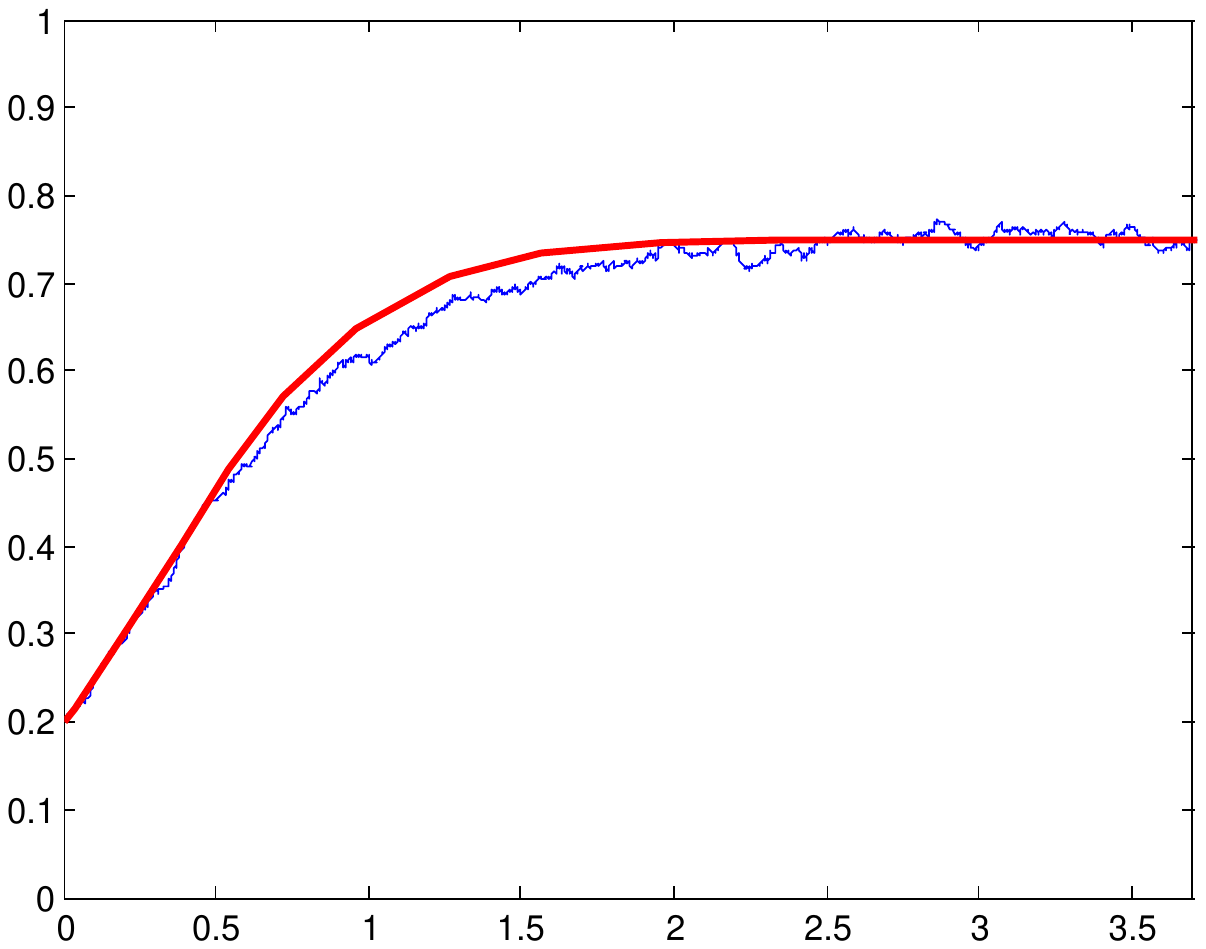}
\caption{$1000$ nodes.} \label{fig:hittings2}
\end{subfigure}
\begin{subfigure}{0.31\textwidth}
\centering
\includegraphics[scale=0.4]{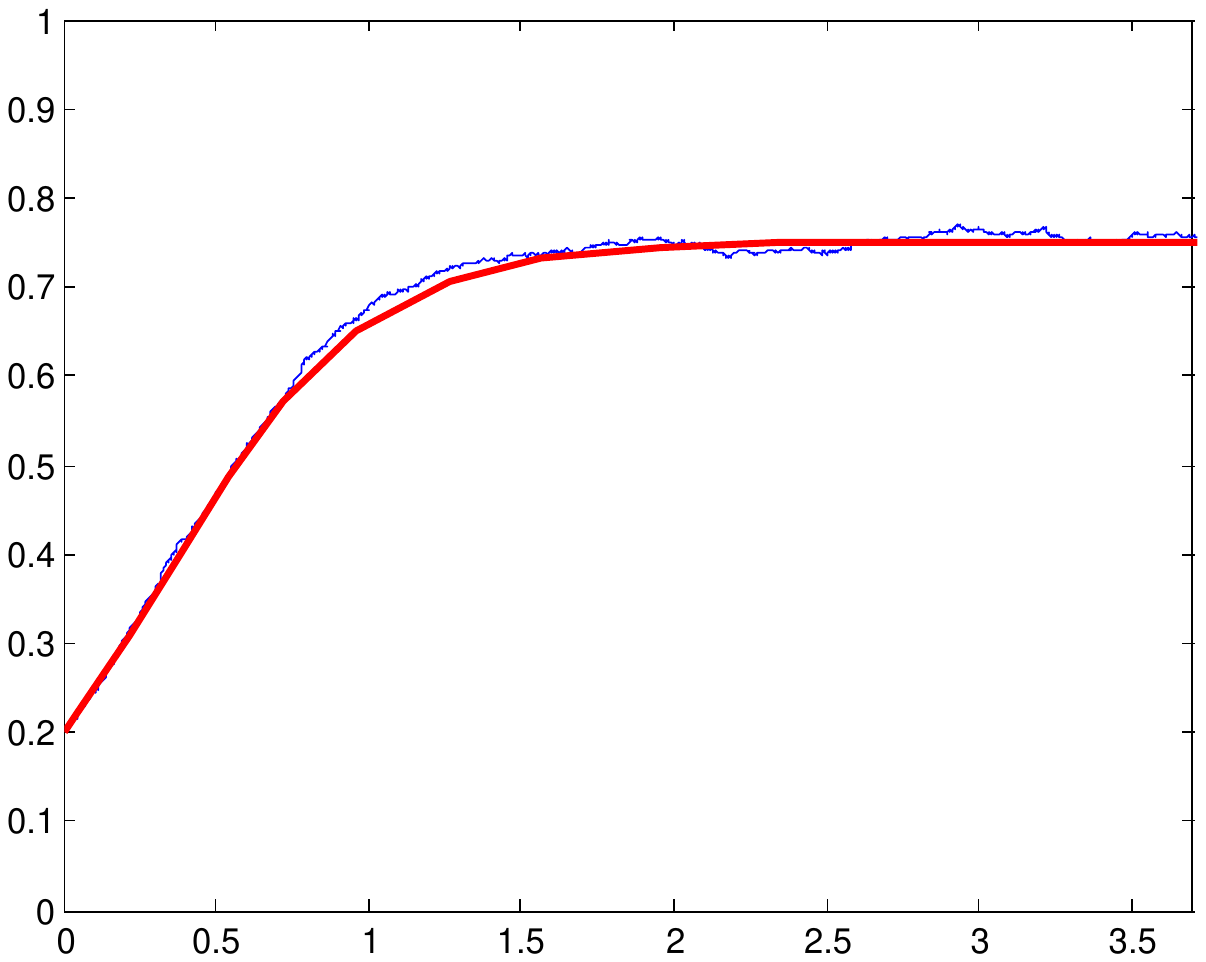}
\caption{$4000$ nodes.} \label{fig:hittings3}
\end{subfigure}
\caption{Evolution over time of the fraction of nodes at state~$1$ (a.k.a. infected). The blue noisy curve illustrates the original stochastic system and the smooth red curve illustrates the solution of the corresponding limiting differential equation.} \label{fig:concentrationillu}
\end{center}
\end{figure}

\section{Concluding Remarks}\label{sec:conclud}

Deriving the exact macroscopic dynamical laws from the microscopic laws of interacting particle systems is challenging when outside the scope of: uncoupled systems (a.k.a., ideal gases), full network of contacts, or network of communities. Within such frameworks, low-resolution macroscopic state variables, such as the fraction of nodes at a particular state, \emph{realize the system}.
In this paper, we proved that under a time-varying random rewiring dynamics of the sparse network of contacts (described in Subsection~\ref{subsec:rewiring}), the non-Markov macro-state variables associated with the fraction of nodes at each state~$k$ realize the system asymptotically in~$N$. That is, one can obtain the \textbf{exact} fluid limit macroscopic dynamics associated with general FMIE interacting particle systems. To establish such result, one has to primarily prove the tightness and finite-dimensional distribution convergence of built-in rate processes (e.g., the gap process converges to zero on the line) of the macroscopic process (e.g., the fraction of nodes at a particular state). The main difficulty in establishing such result for interacting particle systems over networks -- or general systems whose rules are set at the microscopics and respect the peer-to-peer disposition of nodes -- is that the pre-limit macroscopic processes are non-Markov (unless the underlying network of contacts is complete or it is a network of communities), and one of two steps is often hard: i) tightness of the rates; ii) convergence on the line of the rates. By introducing an intermediate process~$\left(\mathbf{\widetilde{X}}^N(t)\right)$, appropriately coupled with the original process~$\left(\mathbf{X}^N(t)\right)$, we were able to address both steps mentioned above. A natural future direction is to characterize more general classes of dynamical networks for which such exact concentration results are attainable.


\appendix~\label{ap:appendixee}

The next theorem provides with an important concentration inequality~\cite{chung2006,concentration}.
\begin{theorem}[Bernstein]\label{th:berny}
Let~$\left(Z_{i}\right)$ be a sequence of zero-mean independent random variables bounded by some constant~$c>0$, i.e.,~$\left|Z_i\right|\leq c$ a.s. for all~$i$. Let
\begin{equation}
\sigma^2(N)= \frac{1}{N} \sum_{i=1}^{N} Var\left(Z_{i}\right)\nonumber
\end{equation}
be the sample mean variance. Then, for any~$\epsilon>0$,
\begin{equation}
\mathbb{P}\left(\frac{1}{N}\sum_{i=1}^N Z_i\geq \epsilon\right)\leq e^{-\frac{N\epsilon^2}{2\sigma(N)^2+2 c \epsilon/3}}\nonumber
\end{equation}
\end{theorem}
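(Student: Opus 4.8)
The plan is to run the classical exponential--moment (Chernoff) argument, specialized to bounded, centered summands. First I would fix $\lambda>0$ and apply Markov's inequality to the nonnegative variable $e^{\lambda\sum_{i=1}^N Z_i}$; using independence of the $Z_i$ this yields
\begin{equation}
\mathbb{P}\left(\frac{1}{N}\sum_{i=1}^N Z_i\geq\epsilon\right)\leq e^{-\lambda N\epsilon}\prod_{i=1}^N\mathbb{E}\left[e^{\lambda Z_i}\right],\nonumber
\end{equation}
so the task reduces to bounding each moment generating function $\mathbb{E}[e^{\lambda Z_i}]$ and then optimizing over $\lambda$.

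For the second step I would use $\mathbb{E}[Z_i]=0$ together with $|Z_i|\leq c$: expanding the exponential series and bounding $|Z_i|^k\leq c^{k-2}Z_i^2$ for $k\geq 2$ gives $\mathbb{E}[e^{\lambda Z_i}]\leq 1+c^{-2}\mathrm{Var}(Z_i)\,\varphi(\lambda c)\leq\exp\bigl(c^{-2}\mathrm{Var}(Z_i)\,\varphi(\lambda c)\bigr)$, where $\varphi(u):=e^{u}-1-u$ and the last step uses $1+x\leq e^{x}$. Multiplying over $i$ and recalling $N\sigma^2(N)=\sum_i\mathrm{Var}(Z_i)$, the Chernoff bound becomes
\begin{equation}
\mathbb{P}\left(\frac{1}{N}\sum_{i=1}^N Z_i\geq\epsilon\right)\leq\exp\left(N\left(-\lambda\epsilon+c^{-2}\sigma^2(N)\,\varphi(\lambda c)\right)\right).\nonumber
\end{equation}
To conclude I would invoke the elementary inequality $\varphi(u)\leq\frac{u^2/2}{1-u/3}$, valid for $0\leq u<3$ (which follows term by term from $k!/2\geq 3^{k-2}$ for $k\geq 2$), and make the choice $\lambda=\epsilon/(\sigma^2(N)+c\epsilon/3)$, which keeps $\lambda c<3$; a short algebraic simplification ($1-\lambda c/3=\sigma^2(N)/(\sigma^2(N)+c\epsilon/3)$) then collapses the exponent to $-N\epsilon^2/(2\sigma^2(N)+2c\epsilon/3)$, which is the asserted bound. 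The degenerate case $\sigma^2(N)=0$ is trivial, since then $Z_i=0$ almost surely for every $i$.

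The argument is entirely routine, so I do not anticipate a genuine obstacle; the only points that need a little care are the term--by--term justification of the bound on $\varphi$ and verifying that the chosen $\lambda$ lies in the admissible interval $(0,3/c)$. Finally, the two--sided version used elsewhere in the paper (Corollary~\ref{co:indepe}, invoked in Lemma~\ref{lem:concentra} and Theorem~\ref{th:coupling}) follows immediately by applying the theorem to both $(Z_i)$ and $(-Z_i)$ and adding the resulting bounds.
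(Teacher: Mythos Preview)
Your argument is the standard Chernoff--Bernstein derivation and is correct; the only subtlety you flag (checking $\lambda c<3$ and the term--by--term bound $k!/2\ge 3^{k-2}$) is handled adequately. Note, however, that the paper does \emph{not} give its own proof of Theorem~\ref{th:berny}: it is stated in the appendix as a known concentration inequality with a reference to the literature, and only its corollaries are used. So there is nothing to compare against beyond observing that what you wrote is precisely the classical proof one would find in those references.
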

We restate the previous theorem into a more useful corollary, as follows.
\begin{corollary}
Under the same assumptions as in Theorem~\ref{th:berny}, we have
\begin{equation}
\mathbb{P}\left(\left|\frac{1}{N}\sum_{i=1}^N Z_i \right|\geq \epsilon\right)\leq 2 e^{-\frac{N\epsilon^2}{2\sigma(N)^2+2 c \epsilon/3}}\nonumber
\end{equation}
\end{corollary}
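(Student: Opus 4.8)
The plan is to derive the two-sided bound from the one-sided Bernstein inequality of Theorem~\ref{th:berny} by a sign-symmetrization argument combined with a union bound. First I would split the absolute-value event into its two tails. Writing $s:=\frac{1}{N}\sum_{i=1}^N Z_i$, we have the exact event identity $\{|s|\geq\epsilon\}=\{s\geq\epsilon\}\cup\{s\leq-\epsilon\}$, so subadditivity of $\mathbb{P}$ yields
\begin{equation}
\mathbb{P}\left(\left|\frac{1}{N}\sum_{i=1}^N Z_i\right|\geq \epsilon\right)\leq \mathbb{P}\left(\frac{1}{N}\sum_{i=1}^N Z_i\geq \epsilon\right)+\mathbb{P}\left(\frac{1}{N}\sum_{i=1}^N Z_i\leq -\epsilon\right).\nonumber
\end{equation}

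The upper tail is bounded directly by Theorem~\ref{th:berny} applied to the sequence $(Z_i)$. For the lower tail, the key observation is that the sequence $(-Z_i)$ satisfies exactly the same hypotheses as $(Z_i)$: each $-Z_i$ is zero-mean, it obeys $\left|-Z_i\right|=\left|Z_i\right|\leq c$ almost surely, and $Var\left(-Z_i\right)=Var\left(Z_i\right)$, so the sample-mean variance $\sigma^2(N)$ is left unchanged. Since the exponent in Theorem~\ref{th:berny} depends on the sequence only through $c$ and $\sigma^2(N)$, applying the theorem verbatim to $(-Z_i)$ gives the lower tail with an identical exponent,
\begin{equation}
\mathbb{P}\left(\frac{1}{N}\sum_{i=1}^N Z_i\leq -\epsilon\right)=\mathbb{P}\left(\frac{1}{N}\sum_{i=1}^N (-Z_i)\geq \epsilon\right)\leq e^{-\frac{N\epsilon^2}{2\sigma(N)^2+2 c \epsilon/3}}.\nonumber
\end{equation}

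Adding the two identical exponential bounds produces the prefactor $2$ and yields the stated inequality. There is no genuine obstacle here: the statement is a routine corollary of Theorem~\ref{th:berny}, and the only point that warrants a line of verification is the invariance of $\sigma^2(N)$ and of the almost-sure bound $c$ under the sign flip $Z_i\mapsto -Z_i$, which is precisely what guarantees that both tails share the same exponent and therefore combine into a single bound with the factor $2$.
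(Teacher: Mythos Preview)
Your proposal is correct and follows essentially the same approach as the paper's own proof: split the absolute-value event into its two tails, bound the upper tail directly by Theorem~\ref{th:berny}, handle the lower tail by applying the theorem to the sign-flipped sequence $(-Z_i)$, and combine via a union bound. Your write-up is in fact slightly more explicit than the paper's in checking that the sign flip preserves both the bound $c$ and the sample-mean variance $\sigma^2(N)$.
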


\begin{proof}
Note that
\begin{equation}
\mathbb{P}\left(-\frac{1}{N}\sum_{i=1}^N Z_i\leq -\epsilon\right)\leq e^{-\frac{N\epsilon^2}{2\sigma(N)^2+2 c \epsilon/3}}\nonumber
\end{equation}
and by symmetry in the assumptions of the theorem -- namely, if~$\left(Z_i\right)_i$ fulfills the conditions, then~$\left(-Z_i\right)_i$ fulfills as well -- we have
\begin{equation}
\mathbb{P}\left(\frac{1}{N}\sum_{i=1}^N Z_i\leq -\epsilon\right)\leq e^{-\frac{N\epsilon^2}{2\sigma(N)^2+2 c \epsilon/3}}\nonumber
\end{equation}
and therefore,
\begin{eqnarray}
\mathbb{P}\left(\left|\frac{1}{N}\sum_{i=1}^N Z_i\right| \geq \epsilon \right) & = & \mathbb{P}\left(\left\{\frac{1}{N}\sum_{i=1}^N Z_i \geq \epsilon\right\} \cup \left\{\frac{1}{N}\sum_{i=1}^N Z_i\leq -\epsilon\right\}\right)\nonumber\\
& \leq & \mathbb{P}\left(\left\{\frac{1}{N}\sum_{i=1}^N Z_i \geq \epsilon\right\}\right) + \mathbb{P}\left( \left\{\frac{1}{N}\sum_{i=1}^N Z_i\leq -\epsilon\right\}\right)\nonumber\\
& \leq & 2 e^{-\frac{N\epsilon^2}{2\sigma(N)^2+2 c \epsilon/3}}\nonumber
\end{eqnarray}
\end{proof}

\begin{corollary}\label{co:indepe}
If in addition to the assumptions in Theorem~\ref{th:berny}, we have bounded variance, i.e.,
\begin{equation}
{\sf Var}\left(Z_i\right)\leq v,\,\,\forall{i\in\mathbb{N}}\nonumber
\end{equation}
for some~$v>0$, then
\begin{equation}
\mathbb{P}\left(\left|\frac{1}{N}\sum_{i=1}^N Z_i \right|\geq \epsilon\right)\leq 2 e^{-kN}\nonumber
\end{equation}
with
\begin{equation}
k=-\frac{\epsilon^2}{2v^2+2 c \epsilon/3}.\nonumber
\end{equation}
\end{corollary}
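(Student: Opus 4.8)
The plan is to deduce Corollary~\ref{co:indepe} directly from the two-sided Bernstein bound established in the previous corollary, the only new ingredient being the elementary observation that the exponent there is monotone in the sample-mean variance~$\sigma^2(N)$, together with the uniform variance hypothesis.

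First I would record that, under the hypotheses of Theorem~\ref{th:berny} together with ${\sf Var}(Z_i)\leq v$ for all~$i$, the sample-mean variance satisfies
\begin{equation}
\sigma^2(N)=\frac{1}{N}\sum_{i=1}^N {\sf Var}(Z_i)\leq \frac{1}{N}\cdot Nv = v,\nonumber
\end{equation}
so in particular $\sigma^2(N)$ is bounded above by a constant that does not depend on~$N$. Second, I would invoke the two-sided form of Bernstein's inequality proved just above, namely
\begin{equation}
\mathbb{P}\left(\left|\frac{1}{N}\sum_{i=1}^N Z_i\right|\geq \epsilon\right)\leq 2\,e^{-\frac{N\epsilon^2}{2\sigma(N)^2+2c\epsilon/3}},\nonumber
\end{equation}
and note that $x\mapsto \dfrac{N\epsilon^2}{2x+2c\epsilon/3}$ is strictly decreasing on $[0,\infty)$; hence replacing $\sigma(N)^2$ by its upper bound $v$ can only enlarge the right-hand side, giving
\begin{equation}
\mathbb{P}\left(\left|\frac{1}{N}\sum_{i=1}^N Z_i\right|\geq \epsilon\right)\leq 2\,e^{-\frac{N\epsilon^2}{2v+2c\epsilon/3}}=2\,e^{-kN},\nonumber
\end{equation}
with $k:=\dfrac{\epsilon^2}{2v+2c\epsilon/3}>0$ independent of~$N$, which is the asserted bound (up to the harmless normalization of the constant in the statement).

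There is essentially no serious obstacle here: the whole content is the monotonicity step plus the uniformity of the variance bound. The one point worth stressing -- and the reason the corollary is isolated -- is that $k$ depends only on $\epsilon$, the almost-sure bound $c$, and the uniform variance bound $v$, and on nothing else; this is precisely the uniformity exploited in Lemma~\ref{lem:concentra} and Theorem~\ref{th:coupling}, where the relevant summands are products of Bernoulli indicators whose variance is bounded uniformly (e.g. by $1/4$) in the conditioning values $\alpha_{m\ell}$, so a single $k$ works simultaneously for all $\alpha_{m\ell}$.
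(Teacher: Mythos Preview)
Your proposal is correct and is precisely the argument the paper leaves implicit: the corollary is stated without proof, to be read as an immediate consequence of the two-sided Bernstein bound just above, and your monotonicity-plus-uniform-variance step is the intended one-line justification. You are also right to flag the constants: the paper's stated $k=-\epsilon^2/(2v^2+2c\epsilon/3)$ carries a spurious minus sign and a $v^2$ where $v$ should appear, and your $k=\epsilon^2/(2v+2c\epsilon/3)$ is the correct value.
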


\begin{theorem}[Arzel\`a-Ascoli;\cite{Ruth}]\label{th:arzela}
Let $\left(\overline{Z}^{N}(t)\right)$ be a sequence of c\`{a}dl\`{a}g processes. Then, the sequence of probability measures $\mathbb{P}_{\overline{Z}^N}$ induced on $D_{\left[0,T\right]}$ by $\left(\overline{Z}^{N}(t)\right)$ is tight and any weak limit point of this sequence is concentrated on the subset of continuous functions $C_{\left[0,T\right]}\subset D_{\left[0,T\right]}$ if and only if the following two conditions hold for each~$\epsilon>0$:
\begin{eqnarray}
\lim_{k\rightarrow\infty}\limsup_{N\rightarrow\infty}\mathbb{P}\left(\sup_{0\leq t\leq T} \left|\overline{Z}^{N}(t)\right|\geq k\right) &= &0\mbox{         (Uniform Boundness)}\label{eq:arzela}\\
\lim_{\delta\rightarrow 0}\limsup_{N\rightarrow\infty}\mathbb{P}\left(\omega(\overline{Z}^{N},\delta,T)\geq \epsilon\right)&= &0\mbox{   (Equicontinuity)}\label{eq:arzela2}
\end{eqnarray}
where we defined the modulus of continuity
\begin{equation}
\omega(x, \delta, T)=\sup\left\{\left|x(u)-x(v)\right|\,:\,0\leq u,v\leq T,\, \left|u-v\right|\leq \delta\right\}.\nonumber
\end{equation}
\end{theorem}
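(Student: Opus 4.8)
The plan is to establish the equivalence through Prokhorov's theorem, which on the Polish space $D_{\left[0,T\right]}$ identifies tightness with relative compactness in the topology of weak convergence, together with the classical Arzel\`a--Ascoli characterization of compact subsets of $C_{\left[0,T\right]}$ (uniform boundedness plus equicontinuity). The decisive bridge between the two spaces is the elementary fact that the Skorokhod ($J_1$) and uniform topologies coincide on $C_{\left[0,T\right]}$, and that a sequence in $D_{\left[0,T\right]}$ converging in the Skorokhod sense to a \emph{continuous} limit necessarily converges uniformly. I would prove the two implications separately.

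For sufficiency, assume \eqref{eq:arzela}--\eqref{eq:arzela2} and fix $\eta>0$. Using \eqref{eq:arzela} pick $b>0$ with $\limsup_N \mathbb{P}\left(\sup_{0\le t\le T}|\overline{Z}^N(t)|\ge b\right)\le \eta/2$, and using \eqref{eq:arzela2} pick, for each $j\in\mathbb{N}$, some $\delta_j>0$ with $\limsup_N \mathbb{P}\left(\omega(\overline{Z}^N,\delta_j,T)\ge 1/j\right)\le \eta\,2^{-j-1}$. Set
\begin{equation}
K = \Big\{x\in C_{\left[0,T\right]}\;:\;\sup_{0\le t\le T}|x(t)|\le b,\ \omega(x,\delta_j,T)\le 1/j\ \forall j\Big\}.\nonumber
\end{equation}
By Arzel\`a--Ascoli $K$ is a closed, uniformly bounded, equicontinuous family, hence compact in $C_{\left[0,T\right]}$ and therefore compact in $D_{\left[0,T\right]}$. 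A union bound gives $\limsup_N \mathbb{P}(\overline{Z}^N\notin K)\le \eta/2+\sum_j \eta\,2^{-j-1}=\eta$; absorbing the finitely many small-$N$ terms via inner regularity of each individual law yields a compact $K_\eta\subseteq C_{\left[0,T\right]}$ with $\sup_N \mathbb{P}(\overline{Z}^N\notin K_\eta)\le\eta$. Prokhorov's theorem~\cite{billi,bogachev} then delivers tightness. Since each $K_\eta$ is closed in $D_{\left[0,T\right]}$, the portmanteau theorem shows any weak limit point $\overline{Z}$ obeys $\mathbb{P}(\overline{Z}\in K_\eta)\ge 1-\eta$, and as $K_\eta\subseteq C_{\left[0,T\right]}$, letting $\eta\to 0$ gives $\mathbb{P}(\overline{Z}\in C_{\left[0,T\right]})=1$.

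For necessity, assume tightness with all limit points in $C_{\left[0,T\right]}$. Condition \eqref{eq:arzela} is immediate, since for each $\eta$ there is a compact $K\subseteq D_{\left[0,T\right]}$ with $\sup_N \mathbb{P}(\overline{Z}^N\notin K)\le\eta$, and every compact subset of $D_{\left[0,T\right]}$ is uniformly bounded, say by $b$, whence $\mathbb{P}\left(\sup_{0\le t\le T}|\overline{Z}^N|\ge b\right)\le\eta$. For \eqref{eq:arzela2} I would argue by contradiction: if it fails there are $\epsilon,\rho>0$, $\delta_k\downarrow 0$ and $N_k\to\infty$ with $\mathbb{P}\left(\omega(\overline{Z}^{N_k},\delta_k,T)\ge\epsilon\right)\ge\rho$. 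Passing to a weakly convergent subsequence $\overline{Z}^{N_k}\Rightarrow \overline{Z}$ with $\overline{Z}\in C_{\left[0,T\right]}$ a.s., and invoking the Skorokhod Representation Theorem~\cite{skorokhod,billi}, realize these on one space with $\widetilde{Z}^{N_k}\to\widetilde{Z}$ a.s.\ in the Skorokhod topology; because $\widetilde{Z}$ is continuous this convergence is uniform, so $\omega(\widetilde{Z}^{N_k},\delta,T)\to\omega(\widetilde{Z},\delta,T)$ a.s.\ for each fixed $\delta$ (the continuous modulus satisfies $|\omega(f,\delta,T)-\omega(g,\delta,T)|\le 2\sup_{0\le t\le T}|f(t)-g(t)|$). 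Monotonicity of $\omega$ in $\delta$ gives, for fixed $\delta$ and $k$ large enough that $\delta_k\le\delta$, the bound $\mathbb{P}\left(\omega(\widetilde{Z}^{N_k},\delta,T)\ge\epsilon\right)\ge\mathbb{P}\left(\omega(\widetilde{Z}^{N_k},\delta_k,T)\ge\epsilon\right)\ge\rho$, whereas reverse Fatou yields $\limsup_k \mathbb{P}\left(\omega(\widetilde{Z}^{N_k},\delta,T)\ge\epsilon\right)\le \mathbb{P}\left(\omega(\widetilde{Z},\delta,T)\ge\epsilon\right)$, which tends to $0$ as $\delta\to 0$ since $\widetilde{Z}\in C_{\left[0,T\right]}$. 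Choosing $\delta$ small contradicts the lower bound $\rho$.

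The main obstacle is exactly the feature that separates this $C$-tightness statement from the general tightness criterion on $D_{\left[0,T\right]}$: here the \emph{continuous} modulus $\omega(\cdot,\delta,T)$ appears rather than the weaker Skorokhod modulus built from partitions, and it is this stronger control of oscillation over \emph{every} window of width $\delta$ that forces jumps to vanish in the limit and pins the accumulation points to $C_{\left[0,T\right]}$. The crux is therefore the compatibility of the two topologies on continuous paths --- that $J_1$-convergence to a continuous limit upgrades to uniform convergence --- which is what makes $\omega(\cdot,\delta,T)$ behave continuously along the relevant sequences and lets the Skorokhod-representation argument close. Care is also needed in the routine but non-trivial verifications that $K$ is genuinely $D_{\left[0,T\right]}$-compact and that the modulus is measurable, both of which I would import from~\cite{billi,bogachev}.
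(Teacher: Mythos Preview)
The paper does not actually prove this theorem: it is stated in the Appendix as a known result and attributed to~\cite{Ruth}, with no accompanying argument. So there is no ``paper's proof'' to compare against; the authors simply import the statement as a black box and apply it in Theorem~\ref{th:finaltight}.

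Your proposal is a correct and standard proof of the $C$-tightness criterion. The sufficiency direction is the usual Prokhorov/Arzel\`a--Ascoli construction; one small point worth making explicit is that although you define $K$ as a subset of $C_{\left[0,T\right]}$, the union bound you write is still valid because any c\`adl\`ag path $x$ satisfying $\omega(x,\delta_j,T)\le 1/j$ for all $j$ is automatically continuous (a jump of size $a>0$ forces $\omega(x,\delta,T)\ge a$ for every $\delta>0$), so the event $\{\overline{Z}^N\notin C_{\left[0,T\right]}\}$ is already covered by the oscillation events. The necessity direction via Skorokhod representation and the upgrade from $J_1$ to uniform convergence at continuous limits is exactly the right mechanism, and your reverse-Fatou/monotonicity step closes the contradiction cleanly.

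In short: your argument is sound and self-contained, whereas the paper simply cites the result. Nothing to fix.
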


\begin{theorem}[Orthogonality; Proposition A.10 in~\cite{Queues}]\label{th:ortho}
Let~$\left(\mathbf{Y}(t)\right)$ be an $\left(\mathcal{F}_{t}\right)$-adapted c\`adl\`ag process with discrete range and piecewise constant (i.e., constant when it does not jump). Let~$\mathcal{N}_{\lambda}(t)$ and~$\mathcal{N}_{\mu}(t)$ be two independent $\left(\mathcal{F}_{t}\right)$-adapted Poisson processes (hence their compensated versions are $\left(\mathcal{F}_{t}\right)$-martingales, as it is trivial to establish). Assume the rates~$\lambda,\mu$ are nonnegative. Let $f, g$ be two bounded functions defined over the discrete range of $\mathbf{Y}(t)$. Then,
\begin{equation}
\left(\int_{0}^t f\left(\mathbf{Y}(s-)\right)\left(\mathcal{N}_{\lambda}(ds)-\lambda ds\right) \int_{0}^t g\left(\mathbf{Y}(s-)\right)\left(\mathcal{N}_{\mu}(ds)-\mu ds\right)\right)
\end{equation}
is an $\left(\mathcal{F}_{t}\right)$-martingale.
\end{theorem}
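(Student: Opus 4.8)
The plan is to reduce the assertion to the statement that the two driving compensated Poisson martingales are \emph{orthogonal}, i.e.\ have vanishing covariation, which in turn rests on the fact that two independent Poisson processes almost surely never jump at the same instant. Write $M_1(t):=\int_0^t f\left(\mathbf{Y}(s-)\right)\left(\mathcal{N}_{\lambda}(ds)-\lambda\,ds\right)$ and $M_2(t):=\int_0^t g\left(\mathbf{Y}(s-)\right)\left(\mathcal{N}_{\mu}(ds)-\mu\,ds\right)$. First I would record that each $M_i$ is a genuine $\left(\mathcal{F}_t\right)$-martingale: the integrand $f\left(\mathbf{Y}(s-)\right)$ (resp.\ $g\left(\mathbf{Y}(s-)\right)$) is left-continuous, adapted, hence predictable, and bounded, while $\mathcal{N}_{\lambda}-\lambda t$ (resp.\ $\mathcal{N}_{\mu}-\mu t$) is a square-integrable martingale with all moments finite; the stochastic integral of a bounded predictable integrand against such a martingale is itself a square-integrable martingale, so that $\sup_{t\le T}{\sf E}\,M_i(t)^2<\infty$. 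I would also note that each $M_i$ is of finite variation, being a pure-jump process minus an absolutely continuous compensator, with jumps occurring exactly at the jump times of the corresponding Poisson process.

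Next I would invoke the integration-by-parts (product) formula for semimartingales,
\begin{equation}
M_1(t)M_2(t)=\int_0^t M_1(s-)\,dM_2(s)+\int_0^t M_2(s-)\,dM_1(s)+\left[M_1,M_2\right](t),\nonumber
\end{equation}
where $\left[M_1,M_2\right](t)=\sum_{0<s\le t}\Delta M_1(s)\,\Delta M_2(s)$ is the quadratic covariation and $M_1(0)=M_2(0)=0$. The two integral terms are stochastic integrals of the predictable, square-integrable integrands $M_1(s-)g\left(\mathbf{Y}(s-)\right)$ and $M_2(s-)f\left(\mathbf{Y}(s-)\right)$ against the compensated Poisson martingales, hence are themselves $\left(\mathcal{F}_t\right)$-martingales; the $L^2$ bound above guarantees that the It\^{o} isometry applies and upgrades the local martingale to a true martingale.

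The crux — and the step I expect to be the main obstacle — is to show that $\left[M_1,M_2\right]\equiv 0$. Since $M_1$ jumps only when $\mathcal{N}_{\lambda}$ jumps and $M_2$ only when $\mathcal{N}_{\mu}$ jumps, it suffices to argue that the independent Poisson processes $\mathcal{N}_{\lambda}$ and $\mathcal{N}_{\mu}$ share no common jump time almost surely; then $\Delta M_1(s)\,\Delta M_2(s)=0$ for every $s$ and the sum vanishes identically. This no-simultaneous-jumps property follows by superposing the two processes: the pooled jump times form a Poisson process of rate $\lambda+\mu$, and by independent thinning each pooled point is assigned to $\mathcal{N}_{\lambda}$ or $\mathcal{N}_{\mu}$ but never to both, so the event of a shared jump has probability zero on any $\left[0,T\right]$. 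Granting this, the covariation term drops out and $M_1(t)M_2(t)$ equals the sum of the two martingale integrals, hence is an $\left(\mathcal{F}_t\right)$-martingale, which is the claim.

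If one prefers to avoid the general product rule, I would instead verify the martingale property by hand: for $s<t$, decompose $M_1(t)M_2(t)$ via the increments $M_i(t)-M_i(s)$; the terms $M_1(s)\big(M_2(t)-M_2(s)\big)$ and $M_2(s)\big(M_1(t)-M_1(s)\big)$ have vanishing $\mathcal{F}_s$-conditional expectation by the martingale property of $M_1,M_2$, so everything reduces to showing ${\sf E}\big[(M_1(t)-M_1(s))(M_2(t)-M_2(s))\,\big|\,\mathcal{F}_s\big]=0$. Approximating the predictable integrands by simple processes constant on a mesh and expanding the product into a double sum, the off-diagonal terms vanish by the tower property (a later compensated increment has conditional mean zero), while each diagonal term factorizes by the conditional independence of the increments of $\mathcal{N}_{\lambda}$ and $\mathcal{N}_{\mu}$, each compensated increment again having mean zero; passing to the limit along the mesh closes the argument. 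The delicate point in this route is the dependence of $f\left(\mathbf{Y}(s-)\right)$ and $g\left(\mathbf{Y}(s-)\right)$ on both Poisson processes, which is precisely why the left-continuous evaluation at $s-$ is essential: it renders the integrands measurable with respect to the past on each mesh interval, decoupling them from the future Poisson increments.
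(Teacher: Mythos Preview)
Your argument is correct: the integration-by-parts decomposition together with the vanishing of $[M_1,M_2]$ (via the a.s.\ absence of common jumps for independent Poisson processes) is the standard route, and your care about predictability of the left-continuous integrands and about upgrading the local martingales to true martingales via the $L^2$ bound is on point. The paper itself does not prove this statement at all---it is quoted verbatim as Proposition~A.10 from the reference~\cite{Queues} and placed in the appendix without argument---so there is no in-paper proof to compare against; your write-up in fact supplies what the paper omits.
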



\small
\bibliographystyle{IEEEtran}
\bibliography{IEEEabrv,biblio}

\end{document}